\theoremstyle{plain}
\newtheorem{theorem}{Theorem}[section]
\newtheorem{lemma}[theorem]{Lemma}
\newtheorem{proposition}[theorem]{Proposition}
\numberwithin{equation}{section}
\theoremstyle{remark}
\newtheorem{remark}[theorem]{Remark}
\newtheorem{example}[theorem]{Example}
\newtheorem*{claim}{Claim}
\newtheorem*{ack}{Acknowledgement}
\theoremstyle{definition}
\newtheorem{definition}[theorem]{Definition}
\newtheorem{convention}[theorem]{Convention}
\def\Q{\Bbb{Q}}
\def\Z{\Bbb{Z}}
\def\R{\Bbb{R}}
\def\N{\Bbb{N}}
\def\be{\begin{equation}} \def\ee{\end{equation}}
\def\eps{\epsilon}
\def\l{\lambda}
\def\L{\L_{\R}}
\def\ll{\langle}
\def\rr{\rangle}
\def\ext{\operatorname{Ext}}
\def\sm{\setminus}
\def\bp{\begin{pmatrix}}
\def\ep{\end{pmatrix}}
\def\bn{\begin{enumerate}}
\def\en{\end{enumerate}}
\def\order{\operatorname{order}}
\def\ba{\begin{array}}
\def\ea{\end{array}}
\def\L{\Lambda}
\def\s{\sigma}
\def\ti{\widetilde}
\def\lk{\operatorname{lk}}
\def\fr12{\frac{1}{2}}
\def\ol{\overline}
\def\zt{\Z[t^{\pm 1}]}
\def\hom{\operatorname{Hom}}
\def\FH{F\!H}
\def\wti{\widetilde}
\def\lkt{\lk_t}
\def\lktw{\wti{\lk}}
\def\to{\mathchoice{\longrightarrow}{\rightarrow}{\rightarrow}{\rightarrow}}
\newcommand{\shortxra}[2][]{\ext@arrow 0359\rightarrowfill@{#1}{#2}}
\def\longrightarrowfill@{\arrowfill@\relbar\relbar\longrightarrow}
\newcommand{\longxra}[2][]{\ext@arrow 0359\longrightarrowfill@{#1}{#2}}
\renewcommand{\xrightarrow}[2][]{\mathchoice{\longxra[#1]{#2}}%
  {\shortxra[#1]{#2}}{\shortxra[#1]{#2}}{\shortxra[#1]{#2}}}
\newcommand\IG[2][]{\IfFileExists{#2}{\includegraphics[#1]{#2}}{\fbox{File #2 doesn't exist}\message{Imagefile #2 doesn't exist^^J}}}
\begin{document}

\title{On the algebraic unknotting number}

\thanks{The first author was supported by Polish OPUS grant No 2012/05/B/ST1/03195 and by Fulbright Senior Advanced Research Grant}

\date{\today}

\author{Maciej Borodzik}
\address{Institute of Mathematics, University of Warsaw, Warsaw, Poland}
\email{mcboro@mimuw.edu.pl}

\author{Stefan Friedl}
\address{Mathematisches Institut\\ Universit\"at zu K\"oln\\   Germany}
\email{sfriedl@gmail.com}

\def\subjclassname{\textup{2000} Mathematics Subject Classification}
\expandafter\let\csname subjclassname@1991\endcsname=\subjclassname
\expandafter\let\csname subjclassname@2000\endcsname=\subjclassname

\subjclass{Primary 57M27}
\keywords{}

\begin{abstract}
The algebraic unknotting number $u_a(K)$ of a knot $K$ was introduced by Hitoshi Murakami.
It equals the minimal number of crossing changes needed to turn $K$ into an Alexander polynomial one knot.
In a previous paper the authors used the Blanchfield form of a knot $K$ to define an invariant $n(K)$ and
proved that $n(K)\le u_a(K)$.  They also showed that $n(K)$
subsumes all previous classical lower bounds on the (algebraic) unknotting number. In this paper we prove that $n(K)=u_a(K)$.
\end{abstract}

\maketitle

\section{Introduction}

Let $K$ be a knot.
The \emph{unknotting number $u(K)$} is defined to be the minimal number of
crossing changes needed to turn $K$ into the trivial  knot.
The unknotting number is one of the most basic but also most intractable invariant of a knot.
Hitoshi Murakami \cite{Muk90} introduced a more accessible invariant, namely 
the \emph{algebraic unknotting number $u_a(K)$} which is defined to be the minimal number of
crossing changes needed to turn $K$ into a knot with Alexander polynomial equal to one.
(The definition we gave above was shown by Fogel \cite[Theorem~1.4]{Fo93}, see also \cite{Sa99}, to be equivalent to Murakami's original definition
which was given in terms of certain operations on Seifert matrices.)

It is obvious that the algebraic unknotting number is a lower bound on the unknotting number $u(K)$ of a knot.
It is furthermore well--known that the `classical' lower bounds on the unknotting number,
i.e. the lower bounds which can be described in terms of  the Seifert matrix of a knot, like  the Nakanishi index \cite{Na81}, the Levine--Tristram signatures \cite{Mus65,Le69,Tr69,Ta79,BF12}, the Lickorish obstruction \cite{Li85,CL86},
the Murakami obstruction \cite{Muk90} and the Jabuka obstruction \cite{Ja09} give in fact lower bounds on the
algebraic unknotting number.

In \cite{BF11} the authors introduced a new invariant  $n(K)$ of a knot $K$ as follows.
We   write $X(K)=S^3\sm \nu K$ and we consider  the  Blanchfield form
\[ Bl(K)\colon H_1(X(K);\zt)\times H_1(X(K);\zt)\to \Q(t)/\zt.\]
(We refer to Section  \ref{section:seifertbf} for the definition.)
Furthermore, given a
hermitian $n\times n$-matrix $A$ over $\zt$ with $\det(A)\ne 0$  we denote by
$\lambda(A)$ the form
\[ \ba{rcl} \zt^n/A\zt^n \times \zt^n/A\zt^n&\to & \Q(t)/\zt \\
(a,b) &\mapsto & \ol{a}^t A^{-1} b,\ea \]
where we view $a,b$ as represented by column vectors in $\zt^n$.  In \cite{BF11} we defined
\[ n(K):=\mbox{min}
\left\{n\,\left|\,\ba{l}\mbox{there exists a hermitian $n\times n$--matrix $A(t)$ over $\zt$}\\ \mbox{such that $\l(A(t)))\cong Bl(K)$}\\ \mbox{and such that $A(1)$ is diagonalizable over $\Z$}\ea\right. \right\}.\]

In \cite{BF11} we proved that such a matrix $A$ exists, i.e. $n(K)$ is defined, and in fact we showed that $n(K)\leq \deg \Delta_K(t)+1$.
We also proved that $n(K)$ is a lower bound on the algebraic unknotting number, i.e. $n(K)\le u_a(K)$.
We furthermore showed that  $n(K)$ subsumes all the previous classical lower bounds on the unknotting number mentioned above.
In this paper we will now prove that $n(K)$ agrees with the algebraic unknotting number, that is we will show the following theorem:

\begin{theorem}\label{mainthm}
Let $K\subset S^3$ be a knot, then
\[ n(K)=u_a(K).\]
\end{theorem}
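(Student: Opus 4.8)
We already know from \cite{BF11} that $n(K)\le u_a(K)$, so the entire content of Theorem~\ref{mainthm} is the reverse inequality $u_a(K)\le n(K)$. The plan is to start from a hermitian $n\times n$ matrix $A(t)$ over $\zt$ realizing the Blanchfield form, $\l(A(t))\cong Bl(K)$, with $A(1)$ diagonalizable over $\Z$ and $n=n(K)$, and to manufacture from this data a sequence of $n$ crossing changes turning $K$ into an Alexander polynomial one knot. The key translation device is Fogel's theorem, quoted in the introduction: a crossing change on $K$ corresponds, on the level of Seifert-type data, to a very specific small modification. So the first step is to package the matrix $A(t)$ together with the integral diagonalization of $A(1)$ into a Seifert matrix $V$ for $K$ (up to the relevant notion of $S$-equivalence / stabilization), in such a way that $V-tV^t$ presents the Blanchfield pairing $\l(A(t))\cong Bl(K)$, and $V+V^t$ is congruent over $\Z$ to the diagonal matrix $A(1)$; one expects $V$ to have size roughly $2n$.

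Next I would show that if a hermitian matrix $A(t)$ over $\zt$ is presented with $A(1)=\pm\diag(1,\dots,1,c_1,\dots)$ integrally diagonalized, then by row and column operations over $\zt$ (which do not change the isometry type of $\l(A(t))$, hence do not change the knot's Blanchfield form) one can bring $A(t)$ into a normal form where the ``algebraically trivial'' part is split off. Concretely, the goal is to arrange that $A(t)$ is congruent over $\zt$ to a block sum in which one block is $\pm\diag(1,\dots,1)$ of size $n-r$ (contributing nothing to the Blanchfield form, since it is invertible over $\zt$) and the other block is an $r\times r$ hermitian matrix $B(t)$ with $B(1)$ still diagonalizable over $\Z$ — and then, crucially, each generator of the module $\zt^n/A\zt^n$ corresponds to an independent crossing change. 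The arithmetic heart of the argument is that performing a crossing change changes $A(t)$ by adding (or absorbing) a $1\times 1$ block $\bp \pm 1\ep$; doing this $n$ times kills the entire Blanchfield form, and arriving at the trivial form means the resulting knot has trivial Alexander module, i.e. $\Delta=1$. So the count of crossing changes needed is exactly the minimal matrix size $n=n(K)$.

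The main obstacle — and where I expect the real work to lie — is the realization step: given the purely algebraic datum $(A(t), \text{integral diagonalization of }A(1))$, one must realize the intermediate stages geometrically, i.e. produce an actual sequence of knots $K=K_0, K_1,\dots,K_n$ with $K_{i+1}$ obtained from $K_i$ by one crossing change and with Blanchfield forms presented by the successive quotients of $A(t)$. The condition that $A(1)$ is diagonalizable \emph{over $\Z$} (rather than merely over $\Q$) is precisely what is needed here: the symmetrized Seifert form $V+V^t$ of a knot is even-rank and has determinant $\pm 1$ with a prescribed $\Z$-congruence type, and a crossing change changes it in a controlled way, so one must check that the integral diagonalizability survives through the inductive construction. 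I would handle this by an explicit inductive surgery/band argument — roughly, realizing ``add a $\bp\pm1\ep$ block to $A(t)$'' as a single crossing change via a local move on a Seifert surface, as in the Fogel/Saeki picture — and verifying at each stage that (i) the Blanchfield form is correctly presented and (ii) the $A(1)$-part stays integrally diagonal. Once the geometric realization is in hand, termination after $n(K)$ steps at an Alexander-polynomial-one knot is immediate, giving $u_a(K)\le n(K)$ and hence equality.
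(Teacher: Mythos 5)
Your proposal is a genuinely different route from the paper's, and it has a serious gap at exactly the point you flag as ``where I expect the real work to lie.''

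Your plan is purely Seifert-matrix based: package the abstract hermitian matrix $A(t)$ into a Seifert matrix $V$ for $K$ (up to $S$-equivalence), split off $\pm 1$ blocks, and realize each block removal as a crossing change \`a la Fogel/Saeki. The difficulty is that there is no known way to carry out the realization step. Given only that $\l(A)\cong Bl(K)$, there is no canonical relationship between the $n\times n$ matrix $A(t)$ and any $2g\times 2g$ Seifert matrix of $K$; in particular, the Seifert genus of $K$ is completely unrelated to $n(K)$ (a knot with trivial Alexander polynomial has $n(K)=0$ but can have arbitrarily large genus), so ``one expects $V$ to have size roughly $2n$'' is not tenable, and $S$-equivalence alone does not let you shrink the Seifert matrix to match $n$. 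Your claim that ``each generator of the module $\zt^n/A\zt^n$ corresponds to an independent crossing change'' is exactly the statement that needs to be proved, and it is not addressed. What you are proposing amounts to a direct algebraic identification of $n(K)$ with the minimal number of algebraic unknotting operations on a Seifert matrix (Murakami/Saeki's quantity (2)); the paper explicitly remarks that such a direct algebraic proof that $(2)=(4)$ is not known, and it obtains the identity only as a corollary of the geometric theorem.

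The paper proceeds very differently, avoiding Seifert surfaces entirely. After normalizing so that $A(1)=\diag(\eps_1,\dots,\eps_n)$ with $\eps_i=\pm 1$, it realizes the entries of $A(t)^{-1}$ as \emph{twisted linking numbers} $\lkt(c_i,c_j)$ of based curves $c_i\subset X(K)$ each bounding a ``nice disk'' $D_i$ in $S^3$ meeting $K$ twice with opposite signs, with the disks ordered so that $D_i$ precedes $D_j$ for $i<j$ (this is the technical Lemma~\ref{lem:maintechnicallem}, proved by an elaborate sequence of local disk moves $R$, $F$, $T(n)$). Twisting along each nice disk is a $\mp\eps_i$ crossing change, and the precedence condition guarantees later disks survive earlier twists. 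That the end knot $J$ has $\Delta_J=1$ is established not by tracking Seifert forms but by a 4-manifold argument: attaching the corresponding 2-handles produces a $\zt$-cobordism $W$ from $M(K)$ to $M(J)$, and comparing two computations of $\det(Q_W)$ (Propositions~\ref{prop:deltakj} and~\ref{prop:detqw2}) forces $\Delta_K\Delta_J\doteq\Delta_K$. So the realization problem you identify but do not solve is handled by passing to curves and nice disks in $S^3$, and the termination criterion $\Delta_J=1$ is extracted from an intersection-form identity rather than from a Seifert-matrix normal form. Without something playing the role of Lemma~\ref{lem:maintechnicallem} (or an equally nontrivial Seifert-side realization), your argument does not close.
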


In fact in Section \ref{section:proof} we will state and prove a slightly stronger statement which takes into account positive and negative crossing changes.

We have now a following characterization of the algebraic unknotting number.
\begin{proposition}
Let $K\subset S^3$ be a knot. Then the following numbers are equal.
\begin{itemize}
\item[(1)] The algebraic unknotting number, that is the minimal number of crossing needed to turn $K$ into an Alexander polynomial one knot.
\item[(2)] The minimal number of algebraic unknotting moves, see \cite{Muk90,Sa99}, needed to change the Seifert matrix of $K$
into a trivial matrix.
\item[(3)] The minimal second Betti number of a topological 4--manifold that strictly cobounds $M(K)$, the zero framed surgery along $K$, see 
\cite[Definition 2.5]{BF11}.
\item[(4)] The invariant $n(K)$.
\end{itemize}
\end{proposition}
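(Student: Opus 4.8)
The plan is to deduce the proposition by assembling three ingredients: the equality of (1) and (2) is the theorem of Fogel \cite{Fo93} and Saeki \cite{Sa99} recalled in the introduction; the equality of (1) and (4) is Theorem \ref{mainthm}; and (3) will be fitted between the two by proving the two-sided estimate
\[ n(K)\ \le\ \beta(K)\ \le\ u_a(K),\qquad \beta(K):=\min\{b_2(V): V\text{ strictly cobounds }M(K)\},\]
and then invoking Theorem \ref{mainthm} to collapse it to a chain of equalities. Both inequalities in this estimate are essentially already in \cite{BF11}, so most of the real work lies in \emph{that} paper; here I would mainly record the two arguments and then close the loop.

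For the upper inequality $\beta(K)\le u_a(K)$ I would argue geometrically. Writing $m=u_a(K)$, choose crossing changes turning $K$ into a knot $K'$ with $\Delta_{K'}(t)=1$. Each crossing change is realized by $\pm1$--surgery on an unknot in $S^3$ bounding a disc that meets $K$ twice; such a curve is null-homologous in $M(K)$, so the trace of the surgery is a cobordism rel boundary consisting of a single $2$--handle attached along a null-homologous circle. Stacking the $m$ traces yields a topological cobordism $W$ from $M(K)$ to $M(K')$ with $H_1$ infinite cyclic and generated by the meridian. Since $\Delta_{K'}(t)=1$, Freedman's theorem shows $K'$ is topologically slice, so $M(K')$ bounds a topological $4$--manifold $N$ with $H_*(N;\Z)\cong H_*(S^1\times D^3;\Z)$; then $V:=W\cup_{M(K')}N$ has $\partial V=M(K)$, satisfies $b_2(V)=m$, and I would check that it strictly cobounds $M(K)$ in the sense of \cite[Definition 2.5]{BF11}. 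For the lower inequality $n(K)\le\beta(K)$ I would use, as in \cite{BF11}, the equivariant intersection form: for $V$ strictly cobounding $M(K)$, this is a nondegenerate hermitian form on the free $\zt$--module $H_2$ of the infinite cyclic cover of $V$, of rank $b_2(V)$, represented by a hermitian matrix $A(t)$ over $\zt$; Poincar\'e--Lefschetz duality identifies $\lambda(A(t))$ with $Bl(K)$, and $A(1)$ is the ordinary intersection form of $V$, which is diagonalizable over $\Z$ by the definition of strict cobounding. Hence $n(K)\le b_2(V)$ for every such $V$, i.e. $n(K)\le\beta(K)$.

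The one genuinely delicate point is the lower inequality: one must verify that the hypotheses packaged into the term \emph{strictly cobounds} are exactly those needed to make the equivariant intersection form well defined, free of rank $b_2(V)$, a presentation of the Blanchfield form, and integrally diagonalizable after setting $t=1$. This verification, however, is carried out in \cite{BF11}, so the only new ingredient entering the proof is Theorem \ref{mainthm}. To conclude I would combine everything: $n(K)\le\beta(K)\le u_a(K)=n(K)$ by the estimate above together with Theorem \ref{mainthm}, so (3) and (4) both equal $u_a(K)$, and $u_a(K)=(1)=(2)$ by Fogel--Saeki; hence all four numbers coincide.
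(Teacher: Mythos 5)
Your proof is correct and follows essentially the same route as the paper: Saeki--Fogel gives $(1)=(2)$, the chain $n(K)\le\beta(K)\le u_a(K)$ is the inequality $(4)\le(3)\le(1)$ proved in \cite{BF11}, and Theorem~\ref{mainthm} collapses it. The only difference is that you sketch the two \cite{BF11} inequalities rather than simply citing them, but the logical structure is identical.
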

\begin{proof}
Saeki \cite[Theorem~1.1]{Sa99} showed that $(1)=(2)$. In \cite{BF11} it was shown that $(4)\le (3)\le (1)$. By Theorem~\ref{mainthm}
we have actually $(4)=(1)$.
\end{proof}
Note that (2) and (4) are purely algebraic quantities. It would be interesting to find a direct algebraic proof that $(2)=(4)$.

The paper is organized as follows.
In Section \ref{section:bf} we recall the definition of the Alexander module and of the Blanchfield form
using Poincar\'e duality. 
In Section \ref{section:twistedlinkingform}  we then give a more geometric interpretation of the Blanchfield form.

\begin{convention}
  All manifolds are assumed to be oriented and compact, unless it says
  specifically otherwise.
\end{convention}

\begin{ack}
  We would like to thank Micah Fogel for sending us his thesis and
  Hitoshi Murakami for supplying us with a copy of \cite{Muk90}.
  We are also grateful to Jonathan Hillman for providing us with a proof of Lemma \ref{lem:free}.
  Furthermore we would like to express our gratitude to the Renyi
  Institute for its hospitality. The first author wishes also to Indiana University and Universit\"at zu K\"oln for hospitality and Fulbright Foundation
for supporting his visit in Indiana University. The second author wishes to thank Warsaw University for hospitality.
\end{ack}

\section{The Blanchfield form}\label{section:bf}

\subsection{Homologies of complexes over $\zt$}

Let $C_*$ be any chain complex of finitely generated free $\zt$--modules and let $M$ be any $\zt$-module.
We can then consider the corresponding homology and cohomology modules:
\begin{equation}\label{eq:homology}
\ba{rcl} H_*(C;M)&:=&H_*(C_*\otimes_{\zt}M),\mbox{ and }\\
H^*(C;M)&:=&H_*(\hom_{\zt}(C_*,M)).\ea
\end{equation}
By \cite[Theorem~2.3]{Lev77} there is a spectral sequence 
$E_{p,q}^r$
with
\[E_{p,q}^2=\ext^p_{\zt}(H_q(C),M)\] 
and  which converges to  $H^*(C,M)$.
This spectral sequence is  called the Universal Coefficient Spectral Sequence, or UCSS for short.
We note that for any two $\zt$-modules $H$ and $M$
the module $\ext^0_{\zt}(H,M)$ is canonically isomorphic to
$\hom_{\zt}(H,M)$.

Also note that
\[ \ext^p_{\zt}(H,M)=0\]
for any $p>2$ since $\zt$ has cohomological dimension 2.
Finally note, that if $\Z$ is considered as a $\zt$ module
with trivial $t$--action, then $\Z$ admits a resolution of length $1$, in particular
\[ \ext^p_{\zt}(\Z,M)=0\]
for any $p>1$.

For later use we also record the following lemma.

\begin{lemma}\label{lem:ka86} \label{lem:free}
Let $H$ be a finitely generated $\zt$--module, then $\ext^0_{\zt}(H,\zt)$
is a free $\zt$--module.
\end{lemma}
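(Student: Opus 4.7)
The ring $\zt$ is a regular Noetherian ring of global dimension $2$, and the theorem of Suslin (extending Quillen--Suslin to Laurent polynomial rings) asserts that every finitely generated projective $\zt$--module is free. My plan is to produce a finite free resolution of $H$, dualise it, and then deduce freeness of $\hom_{\zt}(H,\zt)$ by a standard homological-algebra argument.

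Since $H$ has projective dimension at most $2$ over $\zt$, I would first choose a finite free resolution
\[ 0\to \zt^{a}\to \zt^{b}\to \zt^{c}\to H\to 0, \]
with all three projective terms taken to be free by Suslin's theorem. Applying $\hom_{\zt}(-,\zt)$ yields a complex
\[ 0\to \hom_{\zt}(H,\zt)\to \zt^{c}\xrightarrow{d}\zt^{b}\to \zt^{a}, \]
so that $\hom_{\zt}(H,\zt)=\Ker(d)$. Setting $I=\Im(d)$, I obtain the short exact sequence
\[ 0\to \hom_{\zt}(H,\zt)\to \zt^{c}\to I\to 0. \]
Because $I$ is a submodule of the free module $\zt^{b}$ and $\zt$ has global dimension $2$, the module $I$ is a first syzygy and therefore has projective dimension at most $1$.

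Choosing a length-one projective resolution $0\to Q_1\to Q_0\to I\to 0$ and comparing it with the short exact sequence above via Schanuel's lemma produces an isomorphism
\[ \hom_{\zt}(H,\zt)\oplus Q_0\cong Q_1\oplus \zt^{c}. \]
The right-hand side is projective, so $\hom_{\zt}(H,\zt)$ is a direct summand of a projective module and is itself projective. A final invocation of Suslin's theorem upgrades projectivity to freeness, which is the statement of the lemma.

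The only non-formal input is the fact that every finitely generated projective module over $\zt$ is free; I would cite this rather than attempt to reprove it. Everything else is routine homological algebra that uses only the global dimension of $\zt$ and Schanuel's lemma. The main obstacle one needs to be comfortable invoking is thus Suslin's theorem on the triviality of projective modules over Laurent polynomial rings with coefficients in $\Z$.
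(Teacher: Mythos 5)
Your proof is correct and takes essentially the same route as the paper: dualise a free resolution of $H$, use that $\zt$ has global dimension $2$ together with Schanuel's lemma to show $\hom_{\zt}(H,\zt)$ is projective, and then invoke the Quillen--Suslin theorem for $\zt$ to upgrade projectivity to freeness. The only cosmetic difference is that the paper compares the four-term dual sequence directly with a length-two projective resolution of $\operatorname{coker}(\varphi^*)$ via the generalized (long) Schanuel lemma, whereas you first truncate at the image and apply the short Schanuel lemma.
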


The lemma is well-known but where are not aware of a good reference. We thus provide a short proof, whose key idea was supplied to us by Jonathan Hillman.

\begin{proof}
Let $H$ be a finitely generated $\zt$--module. Since $\zt$ is Noetherian there exists an exact sequence of the form
$\zt^r\xrightarrow{\varphi} \zt^s\to H$.
Since the Hom-functor $M\mapsto \hom_{\zt}(M,\zt)$ is left-exact the above exact sequence gives rise to  an exact sequence 
\[ \ba{rclcl} 0&\to &\hom(H,\zt)&\to& \hom(\zt^s,\zt)\xrightarrow{\varphi^*}\hom(\zt^r,\zt)\\
&\to& \operatorname{coker}(\varphi^*)&\to& 0.\ea \]
Note that  $\zt$ is a ring of homological dimension 2.
(This is for example a straightforward consequence of the fact that the ring $\Z[t]$ has homological dimension 2 which is proved in \cite[Theorem~5.36]{La06}.) We can therefore find a projective resolution
\[ 0\to P_2\to P_1\to P_0\to \operatorname{coker}(\varphi^*)\to 0\]
for $\operatorname{coker}(\varphi^*)$ of length two. 
Comparing these two resolutions for $\operatorname{coker}(\varphi^*)$ and noting that  $\hom(\zt^s,\zt)$ and $\hom(\zt^r,\zt)$ are free $\zt$-modules implies by
Schanuel's lemma (see \cite[Corollary~5.5]{La99}) that $\hom(H,\zt)$ is projective.
Finally, it is a special case of the Serre Conjecture, see e.g. \cite[Corollary~4.12]{La06}, that a finitely generated projective $\zt$-module is in fact free. This concludes the proof that $\hom(H,\zt)$ is a free $\zt$-module.
\end{proof}

\subsection{Twisted homology, cohomology groups and Poincar\'e duality}

Let $X$ be a topological space and let $\phi\colon \pi_1(X)\to \ll t\rr$ be an epimorphism onto the infinite cyclic group generated by $t$.
We denote by $\pi\colon \wti{X}\to X$  the corresponding infinite cyclic  covering of $X$.
Given a subspace $Y\subset X$ we write $\wti{Y}:=\pi^{-1}(Y)$.

The deck transformation induces a canonical $\zt$--action on $C_*(\wti{X},\wti{Y};\Z)$
and we can thus view $C_*(\wti{X},\wti{Y};\Z)$ as a chain complex of free $\zt$--modules.
Now let $M$ be a module over $\zt$. We then consider homologies $H_*(X,Y;M)$ and $H^*(X,Y;M)$
as defined in \eqref{eq:homology}. The most important instance will be $M=\zt$.

If $K\subset S^3$ is an oriented knot, then we denote by $\phi\colon \pi_1(X(K))\to \ll t\rr$ the epimorphism given by sending the oriented meridian to $t$.
Furthermore, if $X$ is a space with $H_1(X;\Z)\cong \Z$, then we pick either epimorphism from $\pi_1(X)$ onto $ \ll t\rr$.
For different choices of epimorphisms the resulting  modules $H_*(X,Y;\zt)$ and $H^*(X,Y;\zt)$
will be anti--isomorphic, i.e. multiplication by $t$ in one module corresponds to multiplication by $t^{-1}$ in the other module. 
Since this does not affect any of the arguments we will usually not record the choice of $\phi$ in our notation.

Finally suppose that  $X$ is an orientable $n$-manifold and that $W$ is union of components of $\partial X$.
Then for any $\zt$-module $M$,
Poincar\'e duality (see e.g. \cite[Chapter~2]{Wa99}) defines isomorphisms of $\zt$-modules
\[  H_i(X,W;M)\cong \ol{H^{n-i}(X,\partial X\sm W;M)},\]
in particular if $W=\emptyset$, then we get a canonical isomorphism
\[ H_i(X;M)\cong \ol{H^{n-i}(X,\partial X;M)}. \]
Here, given a $\zt$--module $N$ we denote by $\ol{N}$ the same abelian group as $N$ but with the involuted $\zt$--action, i.e. multiplication by $t$ on $\ol{N}$ corresponds to multiplication by $t^{-1}$ on $N$.

\subsection{Orders of $\zt$--modules}

Let $H$ be a finitely generated $\zt$--module. Since $\zt$ is Noetherian it follows that $H$ is also finitely presented, i.e.  we can find a resolution
\[ \zt^m\xrightarrow{A}\zt^n\to H,\]
where we can assume that $m\geq n$. We then define $\order(H)\in \zt$ to be the greatest common divisor of the $n\times n$--minors of $A$. It is well--known that, up to multiplication by a unit in $\zt$, i.e. up to multiplication by an element of the form $\pm t^k, k\in \Z$, the invariant $\order(H)$ is independent of the choice of $A$.
 We refer to \cite{Hi02} for details.
In the following, given $f,g\in \zt$ we write
$f\doteq g$ if $f$ and $g$ agree up to multiplication by a unit in $\zt$.

\begin{example}\label{presentation}
  If $H$ admits a square presentation matrix $A$ over $\zt$ of size
  $n$, then it follows immediately from the definition that the order
  of $H$ equals $\det(A)$.
\end{example}

\begin{example}
  The Alexander polynomial of a knot $K$ is defined to be the order of
  the Alexander module $H_1(X(K);\zt)$. Throughout this paper we will
  normalize the Alexander polynomial such that $\Delta_K(1)=1$ and
  $\Delta_K(t^{-1})=\Delta_K(t)$.
\end{example}

The following result is standard (see e.g. \cite[Section~3]{Hi02}), we will use it often in the future.

\begin{lemma}\label{annihilates}
 The order of any $\zt$--module is also an annihilator, i.e. $\order(H)\cdot v=0$ for any $v\in H$. In
 particular if $K$ is knot, then  for any $c\in H_1(X(K);\zt)$, we have $\Delta_K(t)\cdot c=0$.
\end{lemma}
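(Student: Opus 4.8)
The plan is to reduce the statement to the classical fact that, for a commutative Noetherian ring $R$ and a finitely presented $R$--module $H$ with presentation matrix $A$ of size $m\times n$ (with $m\ge n$), the $n\times n$ minors of $A$ annihilate $H$. First I would recall the set-up: choose a finite presentation
\[ \zt^m\xrightarrow{A}\zt^n\xrightarrow{\pi} H\to 0,\]
so that $H=\zt^n/A\zt^m$, and $\order(H)$ is by definition the gcd $d$ of the $n\times n$ minors of $A$. It suffices to prove that every $n\times n$ minor of $A$ annihilates $H$, since $d$ is a $\zt$--linear combination of these minors (this uses only that $\zt$ is a gcd domain, so that the gcd lies in the ideal generated by the minors — actually one needs a touch more care, see below). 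Since $H$ is generated by $\pi(e_1),\dots,\pi(e_n)$, it is enough to show each $n\times n$ minor kills each $\pi(e_j)$.

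For this I would invoke the adjugate (cofactor) identity. Given any selection of $n$ columns of $A$, forming an $n\times n$ submatrix $B$, Cramer's rule gives $B\cdot\operatorname{adj}(B)=\det(B)\cdot I_n$ over the commutative ring $\zt$. Since every column of $B$ is an element of the image of $A$, hence lies in the submodule $A\zt^m\subset\zt^n$, the columns of $B\cdot\operatorname{adj}(B)=\det(B)I_n$ also lie in $A\zt^m$; that is, $\det(B)\,e_j\in A\zt^m$ for every standard basis vector $e_j$, and therefore $\det(B)\cdot\pi(e_j)=0$ in $H$. As $j$ ranges over $1,\dots,n$ this shows $\det(B)$ annihilates all generators of $H$, hence annihilates $H$. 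Running over all choices of $n$ columns, every $n\times n$ minor of $A$ annihilates $H$.

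It remains to pass from "each minor annihilates $H$" to "$\order(H)=\gcd$ of the minors annihilates $H$." Here I would use that $\zt=\Z[t^{\pm 1}]$ is a UFD (indeed a localization of the UFD $\Z[t]$), so the gcd $d$ of finitely many elements $f_1,\dots,f_k$ divides each $f_i$. If $d$ itself annihilates $H$ we are done immediately; but in general one only knows each $f_i$ kills $H$, and $d\mid f_i$ does not by itself give that $d$ kills $H$. The clean fix is to note we do not actually need $d$: the lemma as applied later only needs \emph{some} nonzero annihilator dividing appropriately, and more to the point the standard statement is that the \emph{ideal} $E_0(H)$ generated by the $n\times n$ minors annihilates $H$ (which is exactly what the cofactor argument proves), and $\order(H)=d$ is, up to a unit, a generator of the smallest principal ideal containing $E_0(H)$. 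For the Alexander-module consequence one argues directly: $H=H_1(X(K);\zt)$ admits a \emph{square} presentation matrix (e.g. from a Seifert surface, $tV-V^T$, or from the CW structure of the infinite cyclic cover), so by Example \ref{presentation} the order is literally the determinant $\Delta_K(t)$ of that square matrix, and the cofactor identity above with $B=A$ shows $\det(A)=\Delta_K(t)$ annihilates $H$ outright — no gcd subtlety arises. I would therefore phrase the proof to prove the clean module statement ($E_0(H)\cdot H=0$, via adjugates) and then remark that in the square-presentation case, which covers the knot application, this gives $\order(H)\cdot H=0$ on the nose.

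The main obstacle is precisely the last point: for a general finitely presented $\zt$--module the "order" as defined via gcd of minors need not itself be an annihilator (only the minor ideal $E_0(H)$ is), so the lemma as literally stated is slightly delicate. I expect the cleanest resolution, and the one I would write, is to observe that any finitely generated $\zt$--module appearing in this paper has a square presentation matrix — this is automatic for $\zt$--modules that are $\Z$--torsion-free of the relevant type, and certainly for Alexander modules of knots — so that $\order(H)\doteq\det(A)$ and the adjugate argument applies verbatim.
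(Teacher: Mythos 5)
The paper offers no proof of this lemma; it cites \cite[Section~3]{Hi02} and moves on, so there is no argument in the text to compare against. Your adjugate argument is the standard one and it is correct as far as it goes: for any $n$ columns of the presentation matrix $A$ forming a square submatrix $B$, the identity $B\operatorname{adj}(B)=\det(B)I_n$ expresses each $\det(B)\,e_j$ as a $\zt$-linear combination of columns of $A$, so every maximal minor, and hence the zeroth Fitting ideal $E_0(H)$, annihilates $H$.

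Your worry about passing from ``$E_0(H)$ annihilates $H$'' to ``$\order(H)$ annihilates $H$'' is well founded, and is more than a delicacy: the lemma as literally stated fails for general finitely generated $\zt$-modules. Take $H=\zt/(2,\,t-1)$, presented by the $1\times 2$ matrix with entries $2$ and $t-1$. The $1\times 1$ minors are $2$ and $t-1$, whose gcd in the UFD $\zt$ is $1$, so $\order(H)\doteq 1$; but $H\cong\Z/2\ne 0$. Here $E_0(H)=(2,t-1)$ is not principal, and its gcd $1$ does not lie in it.

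Your proposed repair is exactly what the paper relies on implicitly. Every module to which Lemma~\ref{annihilates} is applied --- namely $H_1(X(K);\zt)$ for a knot $K$, and $H_1(M;\zt)$ for a homology $S^1\times S^2$ --- admits a \emph{square} presentation matrix $A$ (for instance $Vt-V^t$ with $V$ a Seifert matrix, as the paper itself notes), so that $\order(H)\doteq\det(A)$ by Example~\ref{presentation}; then $\operatorname{adj}(A)A=\det(A)I_n$ gives $\det(A)\zt^n\subset A\zt^n$, i.e.\ $\det(A)\cdot H=0$. The one inaccuracy in your write-up is the phrase ``$\Z$-torsion-free of the relevant type'': the operative hypothesis is that $E_0(H)$ be principal, equivalently that a square presentation exist, which here comes from the Seifert-matrix or surgery description rather than from $\Z$-torsion-freeness.
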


 \begin{remark}
Given $p=p(t)\in \zt$ we define $\ol{p}:=p(t^{-1})$. Note that for any $\zt$--module we have
\[ \order(\ol{M})\doteq \ol{\order(M)}.\]
\end{remark}

We will later make use of the following lemma (see again \cite{Hi02} for details).

\begin{lemma}\label{lem:orders}
Let
\[ 0\to H\to H'\to H''\to 0\]
be a short exact sequence of $\zt$--modules, then
\[ \order(H')\doteq \order(H)\cdot \order(H'').\]
\end{lemma}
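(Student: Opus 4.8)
The plan is to reduce everything to a valuation computation in the UFD $\zt$. First I would note that since $\zt$ is Noetherian, all three modules are finitely generated ($H$ is a submodule and $H''$ a quotient of the finitely generated module $H'$), so all three orders are defined; and that tensoring the exact sequence with the flat $\zt$-module $\Q(t)$ shows $\rank H' = \rank H + \rank H''$. A module of positive rank has order $0$ (any $n$-generator presentation matrix of a rank-$r$ module has rank $n-r<n$, so all its $n\times n$-minors vanish), so by additivity of rank both sides of the asserted identity vanish unless all three modules are $\zt$-torsion; hence I may assume $H,H',H''$ are torsion, and then all three orders are nonzero.

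Next I would use that $\zt$ is a UFD — being the localization of the UFD $\Z[t]$ at the powers of $t$ — hence a normal Noetherian domain in which a nonzero element is determined up to a unit by its valuations at the height-one primes. So it suffices to prove, for each height-one prime $(\pi)$, that $v_\pi(\order(H')) = v_\pi(\order(H)) + v_\pi(\order(H''))$. Fix $(\pi)$; the localization $R := \zt_{(\pi)}$ is a DVR with uniformizer $\pi$. For a $\zt$-torsion module $M$ with presentation $\zt^m\xrightarrow{A}\zt^n\to M$, exactness of localization gives a presentation $R^m\xrightarrow{A}R^n\to M_{(\pi)}$ over $R$, so the ideal of $n\times n$-minors of $A$ localizes to the corresponding ideal over $R$; since $M_{(\pi)}\cong\bigoplus_i R/\pi^{a_i}R$ over the DVR, that minor ideal is $(\pi^{\sum_i a_i})$, and comparing with the global definition gives $v_\pi(\order(M)) = \sum_i a_i = \operatorname{length}_R(M_{(\pi)})$. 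Finally, localization being exact, $0\to H_{(\pi)}\to H'_{(\pi)}\to H''_{(\pi)}\to 0$ is a short exact sequence of finite-length $R$-modules, length is additive on such sequences, and the desired equality of valuations follows. This proves the lemma.

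I expect the main obstacle to be the reverse divisibility $\order(H)\cdot\order(H'')\mid\order(H')$. The divisibility $\order(H')\mid\order(H)\order(H'')$ is easy: from presentations $\zt^{m_1}\xrightarrow{A}\zt^{n_1}\to H$ and $\zt^{m_2}\xrightarrow{B}\zt^{n_2}\to H''$ one builds, by lifting generators of $H''$ to $H'$ and adjoining the image of $H$, a block presentation $\left(\begin{smallmatrix}A & C\\ 0 & B\end{smallmatrix}\right)$ of $H'$, among whose maximal minors appear all products $\det(A')\det(B')$ of maximal minors of $A$ and $B$, and taking gcd's concludes. For the reverse direction a direct Laplace expansion of an arbitrary maximal minor of this block matrix produces minors of $[A\,|\,C]$, not of $A$, and there is no reason $\order(H)$ divides those — so something genuinely homological is needed, which is why I would go through additivity of length after localizing at height-one primes, and I would be careful that the gcd-of-minors definition of $\order$ is compatible with localization and only makes sense for finitely generated modules.
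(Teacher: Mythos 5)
Your proof is correct. Note that the paper does not actually prove this lemma: it states it and refers to Hillman's book \cite{Hi02} for details, so there is no argument in the paper to compare against. Your route -- reduce to the torsion case via rank additivity, then localize at each height-one prime $(\pi)$ of the UFD $\zt$ to get a DVR, identify $v_\pi(\order(M))$ with $\operatorname{length}_R(M_{(\pi)})$ via the structure theorem and the invariance of the $0$-th Fitting ideal, and finish with exactness of localization and additivity of length -- is the standard commutative-algebra proof of multiplicativity of orders, and it is complete and self-contained. Each step checks out: $\zt$ is a UFD as a localization of $\Z[t]$; a finitely generated torsion module localizes to a finite-length module over the DVR because it is killed by a power of $\pi$ there; and the passage from ``gcd of $n\times n$ minors'' globally to ``principal Fitting ideal $(\pi^{\sum a_i})$'' locally is exactly $v_\pi(\order(M))=\min_i v_\pi(\text{minor}_i)$, which equals the length. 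Your closing remark about why the naive block-matrix argument only gives the divisibility $\order(H')\mid \order(H)\order(H'')$, and not the reverse, is a fair observation and correctly motivates the homological detour.
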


\subsection{The homological definition of the Blanchfield form}\label{section:seifertbf}

Let $K\subset S^3$ be a knot.
We  consider the following sequence of maps:
\[
\ba{rcl}
\Phi\colon H_1(X(K);\zt)&\to& H_1(X(K),\partial X(K);\zt)\\[2mm]
&\to& \ol{H^2(X(K);\zt)}\xleftarrow{\cong} \ol{H^1(X(K);\Q(t)/\zt)}\\[2mm]
&\to &\ol{\hom_{\zt}(H_1(X(K);\zt),\Q(t)/\zt)}.\ea \]
Here the first map is the inclusion induced map, the second map is Poincar\'e duality,
the third map comes from the long exact sequence in cohomology corresponding to the coefficients
$0\to \zt\to \Q(t)\to \Q(t)/\zt\to 0$, and the last map is the evaluation map.
All these maps are isomorphisms, and hence define a non-singular form
\[ \ba{rcl} Bl(K)\colon  H_1(X(K);\zt)\times H_1(X(K);\zt)&\to& \Q(t)/\zt\\
(a,b)&\mapsto & \Phi(a)(b),\ea \]
called the \emph{Blanchfield form of $K$}. This form is well-known to be hermitian, in particular   $Bl(K)(a_1,a_2)=\ol{Bl(K)(a_2,a_1)}$ and $Bl(K)(\mu_1 a_1,\mu_2a_2)=\ol{\mu_1}Bl(K)(a_1,a_2)\mu_2$
for $\mu_i\in \zt, a_i\in H_1(X(K);\zt)$.
The Blanchfield form was initially introduced by Blanchfield \cite{Bl57}. We will give a more geometric definition in the next section.

\begin{remark}
By Lemma~\ref{annihilates} the polynomial  $\Delta_K(t)$ annihilates $H_1(X(K);\zt)$, it follows easily from the definitions that 
$Bl(K)$ takes in fact values in $\Delta_K(t)^{-1}\zt/\zt\subset \Q(t)/\zt$.
\end{remark}

\section{The twisted linking form}\label{section:twistedlinkingform} 

\subsection{Pairings on infinite cyclic covers}\label{section:blanchfield}

Let $K\subset S^3$ be an oriented knot.
We write $X=X(K)$, which we endow with the orientation coming from $S^3$, and we denote by $\Delta$ the Alexander polynomial of $K$.
Recall that $\phi\colon \pi_1(X)\to \ll t\rr$ is the unique epimorphism which sends the oriented meridian of $K$ to $t$.
Then $\ll t\rr$ acts on $\wti{X}$, the corresponding infinite cyclic cover of $X$;
we can thus view $H_i(\wti{X})$ as a $\zt$--module. This module is by definition precisely the 
Alexander module $H_i(X;\zt)$ as defined above.

We say that a simple closed curve $c\subset \wti{X}$ is in \emph{general position}
if $t^ic$ and $c$ are disjoint for any $i\in \Z$.
Furthermore we say that a pair of simple closed oriented curves $c,d$ is in general position in $\wti{X}$,
if
 $t^ic$ and $d$ are disjoint for any $i\in \Z$.
 Finally, if $c$ is a simple closed curve and $F$ an embedded surface in $\wti{X}$, then we say that they are in general position if for any $i\in \Z$ the curve $t^ic$ intersects $F$ transversely.

If $c$ is a simple closed oriented curve in $\wti{X}$ and $n\in \N$, then we denote by $nc$ the union of $n$ parallel copies of $c$.
We can and will assume that these parallel copies are in general position to each other.
If $-n\in \N$, then we denote by $nc$ the union of $-n$ parallel copies of $-c$, i.e. of $c$ with opposite orientation.
Finally if $p(t)=\sum_{i=k}^la_it^i\in \zt$, then we denote by $p(t)c$ the union of $a_kt^kc\cup \dots \cup a_lt^lc$.

The following definition is now a variation on the equivariant intersection number in a covering space (see e.g. \cite[p.~495]{COT03}).

\begin{definition}
Let $c,d\subset\wti{X}$ be simple closed oriented curves in general position.
By Lemma~\ref{annihilates} 
there exists an embedded oriented  surface $F\subset \wti{X}$  such that $\partial F=\Delta\cdot c$.
 We can arrange that $F$ and $d$ are in general position.
The \emph{twisted linking number}
of $c$ and $d$ is defined as
\begin{equation}\label{eq:twisted}
\lktw(c,d):= \frac{1}{\Delta}\sum_{i\in \Z}(F\cdot t^id)\cdot t^{-i}\in \frac{1}{\Delta}\zt.
\end{equation}
Here $F\cdot t^id$ denotes the ordinary intersection number of the oriented submanifolds $F$ and $t^id$ in $\wti{X}$.
\end{definition}

\begin{lemma}
The twisted linking form $\lktw(c,d)$ is independent of the choice of $F$.
\end{lemma}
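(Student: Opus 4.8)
The statement to prove is that $\lktw(c,d)$ is independent of the choice of the surface $F$ with $\partial F = \Delta\cdot c$. The plan is to take two such surfaces $F$ and $F'$ (both in general position with respect to $d$, and in general position with respect to the $t$-translates) and show that the quantity $\sum_{i\in\Z}(F\cdot t^id)\,t^{-i}$ is the same for both. First I would observe that $F - F'$ is a $2$-cycle in $\wti{X}$: indeed $\partial(F-F') = \Delta c - \Delta c = 0$, so $F - F'$ represents a class in $H_2(\wti{X};\Z) = H_2(X(K);\zt)$. The core of the argument is then the claim that this homology class contributes nothing to the sum, i.e. that for any $2$-cycle $Z$ in $\wti{X}$ and any simple closed curve $d$ in general position with $Z$, one has $\sum_{i\in\Z}(Z\cdot t^id)\,t^{-i} = 0$.

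**Key steps.** To establish the vanishing claim, I would use the fact that $X(K)$ is a compact $3$-manifold with $H_1(X(K);\Z)\cong\Z$ and boundary a torus, so $\wti X$ has the homology of a finite complex and $H_2(\wti X;\Z)$ is a finitely generated torsion $\zt$-module (here one uses that $\wti X$ is homotopy equivalent to a $2$-complex, or more directly that $H_2(X(K);\zt)$ is $\zt$-torsion since $\Delta\ne 0$ annihilates $H_1$ and a Poincaré–duality / Milnor-exact-sequence argument controls $H_2$). Actually the cleanest route: the ordinary intersection pairing $H_2(\wti X;\Z)\times H_1(\wti X,\partial\wti X;\Z)\to\Z$ combined with equivariance shows that the element $\sum_i (Z\cdot t^i d)\,t^{-i}\in\zt$ depends only on the classes $[Z]\in H_2(\wti X;\Z)$ and $[d]$. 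So it suffices to show the pairing kills $H_2(\wti X;\Z)$ on the left, for which I would note that every class in $H_2(\wti X;\Z)$ is already a boundary "at finite level": since $H_2(\wti X;\Z)$ is $\zt$-torsion, some nonzero $q(t)\in\zt$ kills $[Z]$, and from $q(t)[Z]=0$ in $H_2$ one deduces $q(t)\cdot\big(\sum_i(Z\cdot t^id)t^{-i}\big)=0$ in $\zt$; since $\zt$ is an integral domain this forces $\sum_i(Z\cdot t^id)t^{-i}=0$. An equivalent and perhaps more transparent formulation: $F - F'$ bounds a $3$-chain $\frac{1}{\Delta}\cdot(\text{something})$ only rationally, so instead work with the fact that $\Delta\cdot[F-F']=\partial(\text{a rational }3\text{-chain})$... — but the torsion argument above is cleaner and avoids rational chains.

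**Alternative, more geometric route.** If one prefers to avoid invoking that $H_2(X(K);\zt)$ is torsion as a black box, I would argue as follows. Since $\wti X$ deformation retracts to a $2$-complex, and $\partial(F-F')=0$, there is a $3$-chain $G$ (with $\Z$ coefficients) in $\wti X$ with $\partial G = \Delta(F - F')$ — no wait, that again needs torsion. The genuinely elementary route stays with: $F-F'$ is a genuine $2$-cycle; choose $q(t)\in\zt\setminus\{0\}$ annihilating its class in $H_2(\wti X;\Z)$ (exists because the module is finitely generated over the Noetherian ring $\zt$ and, by the long exact sequence and $\Delta$ annihilating $H_1$, is $\zt$-torsion); write $q(t)(F-F') = \partial W$ for a $3$-chain $W$; then transversality and $\partial W\cdot d = 0$ (since $d$ is a cycle disjoint from $\partial\wti X$ up to translation, and $W\cdot d$ is a well-defined count) give $q(t)\cdot\big(\sum_i((F-F')\cdot t^id)\,t^{-i}\big)=0$, hence the sum vanishes by integrality of $\zt$. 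Dividing by $\Delta$ then shows $\lktw(c,d)$ is the same for $F$ and $F'$.

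**Main obstacle.** The delicate point is justifying that $H_2(\wti X;\Z)$ is $\zt$-torsion (equivalently, that the left radical of the intersection pairing used above is all of $H_2$), and making the "move a $2$-cycle to a boundary at finite level, then the equivariant intersection count is divisible by every translate of its annihilator" argument rigorous — i.e. carefully checking that $Z\mapsto\sum_i(Z\cdot t^id)t^{-i}$ is $\zt$-linear in $[Z]$ (it clearly is: $(t^jZ)\cdot t^id = Z\cdot t^{i-j}d$, which reindexes correctly) and vanishes on boundaries. The boundary case is the heart: if $Z=\partial W$ with $W$ a compact $3$-chain transverse to $d$, then $Z\cdot t^id = \partial W\cdot t^id = 0$ because $t^id$ is a cycle and intersection number of a boundary with a cycle (of complementary dimension) is zero; summing gives $0$. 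Everything else is bookkeeping about general position and the finiteness/torsion of the Alexander module $H_2$.
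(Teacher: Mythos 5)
Your approach is sound in outline but takes a noticeably different route than the paper, and the key input you rely on is not nailed down. The paper proves the \emph{stronger} fact that $H_2(\wti X;\Z)=H_2(X;\zt)$ is actually \emph{zero}: by Poincar\'e duality $H_2(X;\zt)\cong \ol{H^1(X,\partial X;\zt)}$, and the UCSS forces this to vanish because $H_1(X,\partial X;\zt)$ is $\zt$-torsion (so $\hom_{\zt}(-,\zt)=0$) and $H_0(X,\partial X;\zt)=0$. Once $H_2=0$, the closed surface $F\cup -F'$ is null-homologous and $(F\cup-F')\cdot d=0$ immediately; no annihilator or integral-domain trick is needed. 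You instead invoke only that $H_2(\wti X;\Z)$ is $\zt$-torsion, then run the nice argument: if $q(t)[F-F']=0$ then $q(t)(F-F')$ bounds a compact $3$-chain, so $q(t)(F-F')$ has zero intersection with each $t^id$, and by the (conjugate-)sesquilinearity of $Z\mapsto\sum_i(Z\cdot t^id)t^{-i}$ this gives $\ol{q(t)}\cdot\sum_i(Z\cdot t^id)t^{-i}=0$, whence the sum vanishes since $\zt$ is a domain. That reasoning is correct and more robust (it would survive in a setting where $H_2$ is nonzero torsion), but it is overkill here and, more importantly, you never actually establish the torsion claim — you gesture at ``a Poincar\'e duality / Milnor-exact-sequence argument'' without spelling it out. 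One can make it rigorous (e.g.\ the Wang sequence gives that $t-1$ is an automorphism of $H_2(\wti X)$, and localizing at the prime $(t-1)$ plus Nakayama forces $H_2(\wti X)$ to be torsion), but the cleanest such argument is precisely the paper's PD+UCSS computation, which already yields $H_2=0$ and makes the torsion detour unnecessary. So: correct idea, valid alternative mechanism for the last step, but the homological input you treat as a black box is exactly where the real work is, and the paper's direct ``$H_2=0$'' is both simpler and what your proof implicitly needs anyway.
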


\begin{proof}
  By Poincar\'e duality we have
  \[ H_2(X;\zt)\cong \ol{H^1(X,\partial X;\zt)},\] but $H_1(X,\partial
  X;\zt)$ is $\zt$--torsion and $H_0(X,\partial X;\zt)=0$. It now
  follows from the UCSS that $H_2(\wti{X};\Z)=H_2(X;\zt)=0$. Now let
  $F'$ be any other surface cobounding $\Delta\cdot c$, then $F\cup
  -F'$ forms a closed oriented surface in $\wti{X}$, in particular it
  represents an element in $H_2(X;\zt)$.  But since $H_2(X;\zt)=0$ it
  now follows that $(F\cup -F')\cdot d=0$.  This concludes the proof
  of the lemma.
\end{proof}

\begin{lemma}
\[ \lktw(d,c)=\ol{\lktw(c,d)}.\]
\end{lemma}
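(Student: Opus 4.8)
The statement to prove is the hermitian symmetry $\lktw(d,c)=\ol{\lktw(c,d)}$ of the twisted linking form. The natural approach is to relate both sides to an equivariant intersection pairing of $2$-chains and exploit the symmetry of the ordinary intersection form on a $4$-manifold, or more elementarily, to compare two surfaces: one bounding $\Delta\cdot c$ and one bounding $\Delta\cdot d$. First I would choose an embedded oriented surface $F$ with $\partial F=\Delta\cdot c$ (which exists by Lemma~\ref{annihilates}, as already used in the definition) and, symmetrically, an embedded oriented surface $G$ with $\partial G=\Delta\cdot d$, arranging $F,G,c,d$ to be in mutually general position.

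Next I would compute $\Delta^2\cdot\lktw(c,d)$ by counting intersections of $F$ with the various translates $t^i(\Delta d)=\partial(t^iG)$. The key geometric input is that, because $\partial(t^iG)=\Delta\cdot t^id$ lies on the curves $t^id$ which are transverse to $F$, one can push $F$ off the boundary and read off $\sum_i (F\cdot \Delta t^id)\,t^{-i}$ as an algebraic count of the points of $\partial(t^iG)\cap F$; but these same points, re-examined from $G$'s side, are exactly the points where translates of $F$'s boundary $\partial F=\Delta c$ meet $G$. More precisely, I would show
\[
\Delta\sum_{i}(F\cdot t^i(\Delta d))\,t^{-i}\;\doteq\;\ol{\Delta\sum_{i}(G\cdot t^i(\Delta c))\,t^{-i}}
\]
up to the unit ambiguity, where the bar and the sign flip on the translate index come from the fact that the roles of "curve" and "bounding surface" are swapped and that the intersection number $F\cdot t^i\partial G$ equals $(-1)$ times $t^iG\cdot \partial F = t^iG\cdot \Delta c$ by the standard relation between the intersection of a surface with a boundary and the linking-type count in the other direction (Stokes/excision applied to $F\cap G$, whose boundary is $(\partial F\cap G)\cup(F\cap\partial G)$ with opposite orientations). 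Dividing by $\Delta^2$ and using $\ol{\Delta}\doteq \Delta$ (the normalization $\Delta_K(t^{-1})=\Delta_K(t)$) gives the claim; since the final object lies in $\frac{1}{\Delta}\zt$ and has already been shown independent of the choice of surface, the unit ambiguity is harmless.

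The main obstacle I anticipate is the bookkeeping of signs and of the translate index $i\mapsto -i$: one must verify carefully that swapping $c\leftrightarrow d$ together with swapping the surface $F\leftrightarrow G$ introduces exactly the involution $t\mapsto t^{-1}$ and no extra sign. The cleanest way to handle this is to interpret $\sum_i(F\cdot t^id)\,t^{-i}$ as an equivariant intersection number $\langle F,d\rangle_{\zt}\in\zt$ of a $\zt$-chain with a curve, note that $F$ represents $\Delta$ times a generator dual to the class of $c$ via Poincar\'e–Lefschetz duality $H_2(\wti X,\partial)\cong H^2(\wti X)$, and then quote the fact that the resulting pairing $H_1(\wti X)\times H_1(\wti X)\to \Q(t)/\zt$ is, up to the identifications, the same as the homologically-defined Blanchfield pairing $Bl(K)$ of Section~\ref{section:seifertbf}, whose hermitian symmetry was already recorded there. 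If one does not want to invoke that comparison yet, the hands-on argument above via $F\cap G$ works, but requires drawing the local picture at each of the finitely many intersection points to pin the signs down; that is the part that needs genuine care rather than formal manipulation.
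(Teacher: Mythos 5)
Your argument is essentially the paper's own proof: take embedded oriented surfaces $F,G$ with $\partial F=\Delta c$ and $\partial G=\Delta d$, use the boundary of the $1$--manifold $t^iF\cap G$ (your Stokes/excision step) to convert $G\cdot t^ic$ into $t^iF\cdot d$, then use equivariance and reindexing $i\mapsto -i$ to produce the involution $t\mapsto t^{-1}$, and finish with $\ol{\Delta}\doteq\Delta$. The one thing to tighten is the ``up to unit ambiguity'' hedge: the lemma is an exact identity and the sign bookkeeping in the Stokes step closes without any residual unit, so you should carry that step out on the nose rather than appealing to well-definedness to absorb a unit.
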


\begin{proof}
  Let $F,G\subset \wti{X}$ be embedded oriented surfaces such that
  $\partial F=\Delta\cdot c$ and $\partial G=\Delta \cdot d$. We can
  assume that $t^iF$ intersects $G$ transversely for any $i$.  For any
  $i$ the 1--manifold $t^iF\cap G$ defines a cobordism between
  $t^iF\cap d$ and $G\cap t^ic$.  It thus follows that
  \[ \ba{rcl} \Delta \cdot \lktw(d,c)&=&\sum_{i\in \Z} (G\cdot t^ic)t^{-i}=\sum_{i\in \Z} (t^iF\cdot d)t^{-i}=\\[2mm]
  &=&\sum_{i\in \Z} (F\cdot t^{-i}d)t^{-i}=\sum_{i\in \Z} (F\cdot
  t^{i}d)t^{i}=
  \ol{\sum_{i\in \Z} (F\cdot t^{i}d)t^{-i}}=\\[2mm]
  &=& \ol{ \Delta \cdot \lktw(c,d)}= \ol{\Delta} \cdot \ol{\lktw(c,d)}= \Delta
  \cdot \ol{\lktw(c,d)}.\ea \]
\end{proof}

In general, if $c$ and $c'$ are homologous curves in $\wti{X}$, the linking form $\lktw(c,d)$ and $\lktw(c',d)$
will be different (unless $c$ and $c'$ are homologous in $\wti{X}\setminus d$). Nevertheless, $\lktw(c,d)\bmod \zt$
is homology invariant. Therefore, $\lktw(c,d)$ 
descends to a form
\[ H_1(X;\zt)\times H_1(X;\zt)\to \frac{1}{\Delta}\zt/\zt,\]
which by definition is precisely the Blanchfield form $Bl(K)$.
We refer to \cite{Bl57} for details.

\subsection{Based curves and surfaces}\label{section:basedcs}

In this section we will take a point of view which differs from the  discussion in the previous section:
instead of studying objects in the infinite cyclic cover of $X(K)$ we will now consider based objects in $X(K)$.

Let $K\subset S^3$ be an oriented knot. As above we write $X=X(K)$ and we denote the infinite cyclic cover of $X$ by $\wti{X}$.
In this section we will define an invariant $\lkt$ which will turn out to capture the same information as $\lktw$ in the previous section, 
but instead of considering curves in $\wti{X}$
we will now work with based curves in $X$.

We fix once and for all a base point $*$ in $X$. We now need several definitions:
\bn
\item By a \emph{surface} in $X$ we always mean an immersed surface. By a \emph{smooth curve} on the immersed surface we mean the image of a smooth curve on the original surface under the immersion.
\item A \emph{based} curve (respectively surface) in $X$ is an oriented curve (respectively oriented surface) in $X$ together with a path, called
\emph{basing}
connecting it to the base point $*$. We assume that the basing intersects the curve (respectively the surface) in only one point.
\item By an orientation of a based curve (respectively surface) we mean an orientation of the unbased curve (respectively surface).
\item A curve $c$ in $X$ is called \emph{homologically trivial} if $c$ is trivial in $H_1(X;\Z)$.
\item A surface $F$ in $X$ is \emph{homologically invisible}
if any smooth curve on $F$ is null--homologous in $X$.  Note that a curve (respectively surface) is homologically trivial if and only if
it lifts to $\wti{X}$.
\item We say that two based homologically trivial curves are \emph{equivalent} if the unbased curves agree
 and if the basings are homologous relative to the base point
and relative to a path connecting the end points on the curve. (This condition does not depend on the path since the curve is assumed to be homologically trivial.)
Similarly we define equivalence of based homologically invisible surfaces.

\item We say that two based objects are disjoint if the corresponding unbased objects are disjoint.
\item We say that a based curve $c$ and a based surface  $F$ in  $X$ are in \emph{general position}  if the unbased curve and the unbased surface
are in general position and if furthermore the basings are embedded and disjoint from $c$ and from $F$.
\en

Let $c$ be a homologically trivial based curve in $X$ and let $F$ be a homologically invisible based surface in $X$ such that $F$ and $c$ are in general position.
Any intersection point $P$ of the (unbased) curve and the (unbased) surface  comes with a sign $\eps_P\in \{-1,1\}$. To any intersection point $P$ we
can also associate a loop $l_P$ in $X$ in the following way.
We go from the base point $*$ via a smooth curve on  the based surface $F$ to the intersection $P$, and then we go back to $*$ along the curve $c$.
Since $F$ is homologically invisible and $c$ is homologically trivial, it follows that $\phi(l_P)$ is independent of the choices.
Following \cite[p.~499]{COT03} 
we now define
\[ F\cdot c:=\sum_{P\in c\cap F} \eps_P\,\phi(l_P)\in \zt.\]
Note that $F\cdot c$ only depends on the equivalence classes of $F$ and $c$.
We will thus in the following mostly consider based curves and surfaces up to equivalence.

Given a based curve $c$ and $k\in \Z$ we now denote by $t^kc$ the based curve which is given by precomposing the basing with a closed loop $l$
which satisfies $\phi(l)=t^k$. Note that the equivalence class of $t^kc$ is well--defined.
Furthermore, given $n\in \Z$ we denote by $nc$ the union of $|n|$ parallel copies of $c$, with opposite orientation if $n<0$.
For any Laurent polynomial $p(t)\in \zt$ we define $p(t)c$ in the obvious way. Obviously
\[F\cdot p(t)c=p(t)(F\cdot c).\]

Let $F$ be a based homologically invisible surface. Its boundary components inherit basings which are well--defined up to equivalence.
We can thus view $\partial F$ as a union of based curves.

We denote the infinite cyclic covering map of $X$ by $\pi\colon \wti{X}\to X$ and we pick a base point $\wti{*}$ in $\wti{X}$ lying over $*$.
With these choices there is a one-to-one correspondence
\[ \ba{c}\mbox{equivalence classes of}\\
\mbox{based curves (surfaces) in $X$}\ea \Leftrightarrow \ba{c}\mbox{curves (surfaces) in $\wti{X}$.}\ea \]
Now let $c,d$ be based curves which only intersect at $*$.
Then the corresponding  closed curves $\ti{c},\ti{d}$ in $\wti{X}$ are in general position.

By Lemma~\ref{annihilates}, there exists a surface $\wti{F}\subset \wti{X}$ such that $\partial \wti{F}=\Delta\ti{c}$. Let us choose
a curve $\wti{\gamma}$ connecting $\wti{*}$ to a point on $\wti{F}$. The projection of $\wti{F}$ to $X$ yields an immersed surface $F\subset X$. Then $F$
is a based surface, the basing is $\gamma$, a projection of $\wti{\gamma}$ to $X$.

Any smooth curve on $F$ is an image of a curve on $\wti{F}$ by definition. In particular, any smooth curve on $F$ lifts to $\wti{X}$, which means that
$F$ is homologically invisible.
By construction $\partial F=\Delta c$.
We can now define
\[ \lkt(c,d):=\frac{1}{\Delta}F\cdot d\in \frac{1}{\Delta}\zt.\]
It is straightforward to see that
\[ \lkt(c,d)=\lktw(\ti{c},\ti{d})\in \frac{1}{\Delta}\zt.\]
It thus follows from the previous section that $\lkt(c,d)$ is well--defined and that it satisfies
$\lkt(d,c)=\ol{\lkt(c,d)}$. It also follows easily from the definitions that
\[  \lkt(c,d)|_{t=1}\,\,=\,\mbox{lk}(c,d),\]
i.e. the evaluation of $\lkt(c,d)$ at $t=1$ equals the linking number of the unbased curves $c$ and $d$.
Finally note, that  $\lkt(c,d)$ is an invariant of the isotopy class of $c\cup d$.
This follows from the definitions and the fact that any isotopy of $c\cup d$ extends to an isotopy of $S^3$.

From now on we shall use only the notation $\lkt(c,d)$.

By a \emph{framed curve} in $X$ we mean a pair $(c,m)$ where $c$ is a based simple closed curve and $m\in \Z$.
Given such $(c,m)$ we now define
\[ \lkt((c,m),(c,m)):=\lkt(c,c'),\]
where $c'$ is a longitude of $c$ with the property that $\lk(c,c')=m$.
It follows immediately from the above that
\[ \lkt((c,m),(c,m))|_{t=1} \,\,=\,\, \lkt(c,c')|_{t=1}\,\,=\,\,\mbox{lk}(c,c')=m.\]
If $n\ne m$, then 
Also note that if we equip $c$ with framing $n$ instead, then
\[\lkt((c,n),(c,m))=\lkt((c,m),(c,m))+n-m.\]
In the following  we will often suppress $m$ and we will just say that $c$ is a based simple closed curve with framing $m$. In particular if the framing is understood, then we will just write $\lkt(c,c)$.
Also, if $c=(c,m)$ and $d=(d,n)$ are framed curves, such that $c$ and $d$ are disjoint, then we define
\[ \lkt((c,m),(d,n)):=\lkt(c,d).\]

\section{4--manifolds and intersection forms}

\subsection{The twisted intersection form}

In the following let $W$ be a 4--manifold, possibly with boundary, with the following properties:
\bn
\item $H_1(W;\Z)\cong \Z$,
\item $H_1(W;\zt)=0$,
\item $\FH_2(W;\zt):=H_2(W;\zt)/\{\mbox{$\zt$--torsion}\}$ is a free $\zt$--module.
\en
We now define the intersection form $Q_W$ on $\FH_2(W;\zt)$. First consider the sequence of maps
\be \label{equ:defqw} \ba{rcl} \Psi\colon H_2(W;\zt)&\to& H_2(W,\partial W;\zt)\xrightarrow{\cong} \ol{H^2(W;\zt)}\\
&\to& \ol{\hom_{\zt}(H_2(W;\zt),\zt)},\ea \ee
where the first map is the inclusion induced map, the second map is Poincar\'e duality and the third map is the evaluation map.
The second map is evidently an isomorphism. The third map is also an isomorphism,
indeed, since $H_1(W;\zt)=0$ and since $\ext_{\zt}^i(\Z,\zt)=0$ for $i>1$ we see that the UCSS for $H^2(W;\zt)$ collapses, i.e. the evaluation map
\[ H^2(W;\zt)\to \hom_{\zt}(H_2(W;\zt),\zt)\] is in fact an isomorphism.
In constrast, the first map in (\ref{equ:defqw}) is in general not an isomorphism.

From (\ref{equ:defqw}) we now obtain a form
\[ \ba{rcl} H_2(W;\zt)\times H_2(W;\zt)&\to& \zt\\
(a,b)&\mapsto & \Psi(a)(b)\ea \]
but this clearly descends to a form
\[ \FH_2(W;\zt)\times \FH_2(W;\zt)\to \zt,\]
which we denote by $Q_W$.  The form $Q_W$ can also be defined more geometrically using equivariant intersection numbers of immersed based surfaces. This interpretation then quickly shows that $Q_W$ is hermitian. We refer to \cite[Chapter~5]{Wa99} for details.

We now pick a basis for the free $\zt$--module $\FH_2(W;\zt)$ and we denote by $\det(Q_W)$ the matrix of the intersection form $Q_W$ with respect to this basis.
Note that the determinant is in fact well--defined, that is, up to a unit in $\zt$ it does not depend on the choice of basis for $\FH_2(W;\zt)$.
The following lemma shows, that one can also determine $\det(Q_W)$ using any maximal set of linearly independent vectors in $\FH_2(W;\zt)$, not
necessarily a basis.

\begin{lemma}\label{lem:detqw}
 Let $v_1,\dots,v_n\in \FH_2(W;\zt)$ be a maximal set of linearly independent vectors in $\FH_2(W;\zt)$.
We denote by $f\in \zt$ the order of the $\zt$--module
\[ \FH_2(W;\zt)/(v_1,\dots,v_n),\]
then
\[ \det(Q_W)\cdot f\cdot \ol{f}\doteq  \det\left(\{Q_W(v_i,v_j)\}_{ij}\right).\]
\end{lemma}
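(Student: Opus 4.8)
The plan is to compare the matrix of $Q_W$ in a genuine basis $e_1,\dots,e_n$ of the free $\zt$--module $\FH_2(W;\zt)$ with the ``generalized Gram matrix'' $\{Q_W(v_i,v_j)\}_{ij}$ coming from an arbitrary maximal linearly independent family $v_1,\dots,v_n$. Write $v_j=\sum_i P_{ij}e_i$ for a matrix $P\in M_n(\zt)$; since $v_1,\dots,v_n$ is maximal linearly independent, $\det(P)\ne 0$ in $\zt$. If $G=\{Q_W(e_i,e_j)\}_{ij}$ denotes the Gram matrix of $Q_W$ in the chosen basis (so $\det(Q_W)\doteq\det(G)$), then bilinearity and hermitianity of $Q_W$ give
\[ \{Q_W(v_i,v_j)\}_{ij}=\ol{P}^{\,t} G\, P,\]
hence
\[ \det\left(\{Q_W(v_i,v_j)\}_{ij}\right)=\det(\ol{P}^{\,t})\,\det(G)\,\det(P)\doteq \det(Q_W)\cdot \det(P)\cdot\ol{\det(P)}.\]
So the lemma reduces to the purely algebraic claim that $\det(P)\doteq f$, where $f=\order\bigl(\FH_2(W;\zt)/(v_1,\dots,v_n)\bigr)$.

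For that claim, I would argue as follows. The quotient module $H:=\FH_2(W;\zt)/(v_1,\dots,v_n)$ has the presentation
\[ \zt^n\xrightarrow{P}\zt^n\to H\to 0,\]
obtained by writing the generators $v_j$ in terms of the basis $e_i$; this uses that $\FH_2(W;\zt)$ is free of rank $n$ (property (3) of $W$, together with the fact that the rank equals $n$ since $v_1,\dots,v_n$ is a maximal independent set). By Example \ref{presentation}, since $P$ is a square presentation matrix of $H$, we get $\order(H)\doteq\det(P)$, i.e. $f\doteq\det(P)$. Combining with the determinant computation above yields
\[ \det\left(\{Q_W(v_i,v_j)\}_{ij}\right)\doteq \det(Q_W)\cdot f\cdot\ol{f},\]
which is the assertion (note $\ol{\det(P)}\doteq\ol{f}$ by the remark on $\order(\ol{M})$, or simply because conjugation is a ring automorphism of $\zt$).

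The only genuine subtlety — the ``main obstacle'' — is making sure the bookkeeping of the choice of basis and the involution is consistent: one must check that $\FH_2(W;\zt)$ really is free of the right rank so that both $e_i$ and $v_j$ live in $\zt^n$, that $\det(G)\doteq\det(Q_W)$ is the correct normalization of $\det(Q_W)$ as defined just before the lemma, and that the hermitian (rather than symmetric) nature of $Q_W$ is handled correctly, so that the transformation rule is $\ol{P}^{\,t}GP$ and the scaling factor is $\det(P)\ol{\det(P)}=f\ol{f}$ rather than $\det(P)^2$. Once these normalizations are pinned down, the proof is the three displayed lines above. Everything else (bilinearity, the presentation of the quotient, Example \ref{presentation}) is routine.
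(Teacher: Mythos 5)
Your proposal is correct and follows essentially the same route as the paper: pick a basis of the free module $\FH_2(W;\zt)$, write the base-change matrix $P$ expressing the $v_j$ in that basis, compute $\det\bigl(\{Q_W(v_i,v_j)\}\bigr)\doteq\det(Q_W)\det(P)\ol{\det(P)}$ by hermitianity, and identify $\det(P)\doteq f$ by observing that $P$ is a square presentation matrix of the quotient $\FH_2(W;\zt)/(v_1,\dots,v_n)$ and invoking Example~\ref{presentation}. Your write-up is, if anything, slightly cleaner than the paper's, which has a typographical slip ("$Pv_j=w_j$") in describing the base-change matrix.
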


\begin{proof}
Since $\FH_2(W;\zt)$ is free, there is a basis $w_1,\dots,w_n$. The vectors $v_1,\ldots,v_n$ can be expressed in terms of $w_1,\ldots,w_n$.
Let $P$ be an $n\times n$ matrix over $\zt$, such that $Pv_j=w_j$ for any $j=1,\ldots,n$. We have
\[ \det(Q_W(v_i,v_j)_{ij})=\ol{\det(P)}\det(Q_W(w_i\cdot w_j)_{ij})\det(P)\doteq\det(Q_W)\cdot \det(P)\cdot \ol{\det(P)}.\]
We claim that $f\doteq\det(P)$. Indeed, $P$ can be regarded as a map $\zt^n\to\zt^n$. On the one hand, $\det P$ is the order of the cokernel 
(see Example~\ref{presentation}).
On the other hand, the cokernel of $P$ is $\FH_2(W;\zt)/(v_1,\dots,v_n)$.
\end{proof}

\subsection{$\zt$--cobordisms}

We say that a 3--manifold $M$ is a \emph{homology $S^1\times S^2$}
if $M$ is closed, if $H_1(M;\Z)=\Z$
and if $M$ comes equipped with a choice of an isomorphism $H_1(M;\Z)\to \Z$.
Given a 3--manifold $M$ which is a homology $S^1\times S^2$ we can consider the module $H_1(M;\zt)$,
and we can define a Blanchfield form on $H_1(M;\zt)$
in the same fashion as for $X(K)$.
We denote by $\Delta_M=\Delta_M(t)$ the order of $H_1(M;\zt)$. Note that 
$H_1(M;\Z)=\Z$ implies that $\Delta_M(1)=1$, in particular $\Delta_M(t)$ is non-zero.
The standard arguments already employed for $X(K)$ show that
\[ H_2(M;\zt)\cong \ol{H^1(M;\zt)}\cong \ol{\ext^0_{\zt}(\Z,\zt)}\cong \Z\]
is in fact isomorphic to the trivial $\zt$--module $\Z$.

\begin{example}
Let  $K$ be a knot. We denote by $M(K)$ the zero--framed surgery on $K$.
The inclusion map $X(K)\to M(K)$ induces an isomorphism $H_1(X(K);\Z)\to H_1(M(K);\Z)$.
Together with the isomorphism $H_1(X(K);\Z)\to \Z$ sending an oriented meridian to one
we get a preferred isomorphism $H_1(M(K);\Z)\to \Z$.
It follows that  $M(K)$ is a homology $S^1\times S^2$. It is well--known that the inclusion $X(K)\to M(K)$
induces an isomorphism $H_1(X(K);\zt)\to H_1(M(K);\zt)$, which is in fact an isometry of the Blanchfield forms.
\end{example}

\begin{definition}
Let $M$ and $M'$ be 3--manifolds which are homology $S^1\times S^2$'s.
By a \emph{$\zt$--cobordism between $M$ and $M'$} we understand an orientable, compact 4--manifold $W$ with the following properties:
\bn
\item $\partial W=M\cup -M'$,
\item $H_1(M;\Z)\to H_1(W;\Z)$ and $H_1(M';\Z)\to H_1(W;\Z)$ are isomorphisms, and the following diagram given by the inclusions
and the preferred isomorphisms commutes:
\[ \xymatrix{ H_1(M;\Z)\ar[dr]\ar[r] &H_1(W;\Z)&\ar[dl]\ar[l] H_1(M';\Z) \\ &\Z,&}\]
\item $H_1(W;\zt)=0$.
\en
\end{definition}

We now have the following lemma:

 \begin{lemma}
Let $M$ and $M'$ be 3--manifolds which are homology $S^1\times S^2$'s.
Let $W$ be a $\zt$--cobordism between $M$ and $M'$, then
the following $\zt$--modules are free:
\bn
\item $H_2(W,M;\zt)$ and $ H_2(W,M';\zt)$,
\item $ H_2(W,\partial W;\zt)$, and
\item $\FH_2(W;\zt)=H_2(W;\zt)/\mbox{$\zt$--torsion}.$
\en
\end{lemma}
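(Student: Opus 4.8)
The plan is to deduce all three statements from Poincar\'e--Lefschetz duality and the Universal Coefficient Spectral Sequence (UCSS), reducing each freeness claim to Lemma~\ref{lem:free} and to the Serre conjecture for $\zt$ used in its proof. Throughout I write $T$ for the $\zt$--torsion submodule of $H_2(W;\zt)$.

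\emph{Parts (1) and (2).} Poincar\'e--Lefschetz duality gives
\[ H_2(W,M;\zt)\cong\ol{H^2(W,M';\zt)},\qquad H_2(W,M';\zt)\cong\ol{H^2(W,M;\zt)},\qquad H_2(W,\partial W;\zt)\cong\ol{H^2(W;\zt)},\]
so, as the involution $\ol{\,\cdot\,}$ preserves freeness, it is enough to show that the cohomology groups $H^2(W;\zt)$ and $H^2(W,M';\zt)$ (hence by symmetry also $H^2(W,M;\zt)$) are free $\zt$--modules. For each I would run the UCSS $E_2^{p,q}=\ext_{\zt}^p(H_q(\cdot),\zt)\Rightarrow H^{p+q}(\cdot)$; the only possibly nonzero contributions to $H^2$ lie in bidegrees $(0,2),(1,1),(2,0)$, and the $(0,2)$--entry $\hom_{\zt}(H_2(\cdot),\zt)$ is free by Lemma~\ref{lem:free}. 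For $W$ itself the remaining two entries vanish exactly as in the discussion of \eqref{equ:defqw}: $H_1(W;\zt)=0$ kills $(1,1)$, and $H_0(W;\zt)\cong\Z$ admits a length--one resolution, so $\ext_{\zt}^2(\Z,\zt)=0$ kills $(2,0)$. For the pair $(W,M')$ I would instead use the long exact homology sequence of the pair: since $W$ and $M'$ are connected with $H_1(\,\cdot\,;\Z)\cong\Z$, the map $H_0(M';\zt)\to H_0(W;\zt)$ is an isomorphism $\Z\to\Z$, whence $H_0(W,M';\zt)=0$; and then $H_1(W;\zt)=0$ forces $H_1(W,M';\zt)=0$ as well. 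So the $(1,1)$ and $(2,0)$ entries again vanish, the UCSS collapses, and $H^2(W,M';\zt)\cong\hom_{\zt}(H_2(W,M';\zt),\zt)$ is free.

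\emph{Part (3), first step.} Using part (2) I would identify $\FH_2(W;\zt)$ with a first syzygy of $H_1(\partial W;\zt)$. Consider the segment
\[ H_2(\partial W;\zt)\xrightarrow{\;j\;}H_2(W;\zt)\xrightarrow{\;k\;}H_2(W,\partial W;\zt)\xrightarrow{\;\delta\;}H_1(\partial W;\zt)\longrightarrow H_1(W;\zt)=0 \]
of the long exact sequence of $(W,\partial W)$. The module $H_2(\partial W;\zt)=H_2(M;\zt)\oplus H_2(M';\zt)$ is $\zt$--torsion, since $H_2(M;\zt)\cong H_2(M';\zt)\cong\Z$ (the trivial $\zt$--module) as established above; hence $\Im j\subseteq T$, while conversely $k(T)=0$ because $H_2(W,\partial W;\zt)$ is torsion--free by part (2). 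Thus $\ker k=T$, so $k$ induces an isomorphism $\FH_2(W;\zt)=H_2(W;\zt)/T\cong\ker\delta$, and $\delta$ is onto because $H_1(W;\zt)=0$. This produces a short exact sequence
\[ 0\longrightarrow\FH_2(W;\zt)\longrightarrow H_2(W,\partial W;\zt)\longrightarrow H_1(\partial W;\zt)\longrightarrow 0 \]
with free middle term.

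\emph{Part (3), conclusion, and the main obstacle.} The remaining ingredient is that $\operatorname{pd}_{\zt}H_1(\partial W;\zt)\le 1$, i.e.\ that the Alexander modules $H_1(M;\zt)$ and $H_1(M';\zt)$ admit square presentation matrices. Granting this, the displayed sequence exhibits $\FH_2(W;\zt)$ as the kernel of a surjection from a free module onto a module of projective dimension $\le 1$, hence $\FH_2(W;\zt)$ is projective, and therefore free by the Serre conjecture for $\zt$. For a homology $S^1\times S^2$ the bound $\operatorname{pd}_{\zt}H_1(M;\zt)\le 1$ is standard (see e.g.\ \cite{Hi02}); it can also be extracted from Poincar\'e duality on $M$, since the UCSS gives $H^2(M;\zt)\cong\ext^1_{\zt}(H_1(M;\zt),\zt)$ and, comparing the resulting two--step filtration on $H^3(M;\zt)$ (with graded pieces $\ext^2_{\zt}(H_1(M;\zt),\zt)$ and $\ext^1_{\zt}(\Z,\zt)\cong\Z$) with $H^3(M;\zt)\cong\ol{H_0(M;\zt)}\cong\Z$, forces $\ext^2_{\zt}(H_1(M;\zt),\zt)=0$, which for the regular ring $\zt$ of global dimension $2$ is equivalent to $\operatorname{pd}_{\zt}H_1(M;\zt)\le 1$. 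The cohomological computations in (1)--(2) are routine variants of the one already present in the paper; I expect the genuinely delicate points to be the exact--sequence bookkeeping of (3) and this projective--dimension statement for the boundary Alexander modules.
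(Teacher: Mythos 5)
Your proof of parts (1) and (2) is correct and follows the paper's argument essentially verbatim: Poincar\'e--Lefschetz duality, a long exact sequence computation to kill $H_0$ and $H_1$ of the pair, the collapse of the UCSS, and Lemma~\ref{lem:free}. Your proof of part (3), however, takes a genuinely different route. The paper uses the pair $(W,M)$, obtaining the sequence $0\to\FH_2(W;\zt)\to H_2(W,M;\zt)\to H_1(M;\zt)\to 0$, and then compares the resulting $n\times m$ presentation matrix of $H_1(M;\zt)$ to a square Seifert-type presentation via \cite[Theorem~6.1]{Li97} to conclude $m=n$; the five lemma then shows $\FH_2(W;\zt)\cong\zt^n$. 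You instead work with the pair $(W,\partial W)$, obtaining $0\to\FH_2(W;\zt)\to H_2(W,\partial W;\zt)\to H_1(\partial W;\zt)\to 0$, and then invoke $\operatorname{pd}_{\zt}H_1(\partial W;\zt)\le 1$ together with a Schanuel-style syzygy argument to conclude $\FH_2(W;\zt)$ is projective, hence free. What your approach buys is that it replaces the paper's explicit generator-counting step (which quietly uses a ``well-known'' square presentability of $H_1(M;\zt)$ justified only by a Seifert-matrix heuristic) with a self-contained derivation of the needed projective-dimension bound from Poincar\'e duality on $M$ and the UCSS. The one small step you leave implicit --- that $\ext^2_{\zt}(H,\zt)=0$ forces $\operatorname{pd}_{\zt}H\le1$ for a finitely generated $H$ over the global-dimension-two ring $\zt$ --- is standard (take $0\to P\to F_1\to F_0\to H\to 0$ with $P$ projective; vanishing of $\ext^2$ means $\hom(F_1,\zt)\to\hom(P,\zt)$ is onto, which splits $P\hookrightarrow F_1$ since $P$ is a direct summand of a free module). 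So the proposal is correct; it is the same homological framework as the paper but with a cleaner, more conceptual finish to part (3).
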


\begin{proof}
(1)
We first consider $H_2(W,M;\zt)$. By Poincar\'e duality this is isomorphic to $\ol{H^2(W,M';\zt)}$.
The long exact sequence in $\zt$--homology of the pair $(W,M')$ yields:
\[ \ba{ccccccccc} &&&H_1(W;\zt)&\to& H_1(W,M';\zt)&\to&\\
& H_0(M';\zt)&\to & H_0(W;\zt)&\to& H_0(W,M';\zt)&\to& 0.\ea \]
Our assumptions on $W$ imply that $H_1(W;\zt)=0$ and that $H_0(M';\zt)\to H_0(W;\zt)$ is an isomorphism.
We thus conclude that
\[ H_1(W,M';\zt)=H_0(W,M';\zt)=0.\]
The UCSS implies that
\[ H^2(W,M';\zt)\cong \hom_{\zt}(H_2(W,M';\zt),\zt),\]
but from  Lemma \ref{lem:ka86} it follows that $\hom_{\zt}(H_2(W,M';\zt),\zt)$ is a free $\zt$--module.
We infer that  $H_2(W,M;\zt)$ is a free $\zt$--module. The same argument shows of course that
$H_2(W,M';\zt)$ is also free.

(2) By Poincar\'e duality we have an isomorphism
\[ H_2(W,\partial W;\zt)\cong \ol{H^2(W;\zt)}.\]
Since  $H_1(W;\zt)=0$ by assumption and since $\ext_{\zt}^i(\Z,\zt)=0$ for $i>1$ it
follows from the  UCSS, that $H^2(W;\zt)\cong \hom_{\zt}(H_2(W;\zt),\zt)$,
which is free by   Lemma \ref{lem:ka86}.

(3) Finally we  want to show that $\FH_2(W;\zt)$ is also free.
Recall that  by assumption $H_1(W;\zt)=0$.
We obtain the following exact sequence
\[ \ba{cccccccc} &H_2(M;\zt)&\to& H_2(W;\zt)&\to &H_2(W,M;\zt)&\to\\
\to & H_1(M;\zt)&\to& 0.\ea \]
Note that $H_2(M;\zt)$ is $\zt$--torsion and $H_2(W,M;\zt)$ is a free $\zt$--module by the above, in particular the module $H_2(W,M;\zt)$ is $\zt$--torsion free.
The above exact sequence thus descends to the
 following short exact sequence
\begin{equation}\label{exactsequence}
0\to \FH_2(W;\zt)\to H_2(W,M;\zt)\to H_1(M;\zt)\to 0.
\end{equation}
Since $H_2(W,M;\zt)$ is free we can find an isomorphism
\[ \Phi\colon \zt^n\to H_2(W,M;\zt)\]  for some appropriate $n$.

Now let $v_1,\dots,v_m$ be a minimal generating set for $\FH_2(W;\zt)$. We thus obtain the following commutative diagram of exact sequences:
\[ \xymatrix{ &\zt^m\ar[d]^\Psi\ar[r]^A & \zt^n \ar[d]^\cong_\Phi \ar[r] & H_1(M;\zt)\ar[r]\ar[d]^= &0 \\
0\ar[r] &\FH_2(W;\zt)\ar[r]^d & H_2(W,M;\zt)\ar[r] &H_1(M;\zt)\ar[r] &0,}\]
where $\Psi$ sends the $i$-th standard basis vector of $\zt^m$ to $v_i$
and where $A$ is given by $\Phi^{-1}\circ d\circ \Psi$.
The $n\times m$--matrix $A$ over $\zt$ is thus a presentation matrix for $H_1(M;\zt)$.
It is well--known that $H_1(M;\zt)$ admits a square presentation matrix $B$, e.g. we can take $B=Vt-V^t$, where $V$ denotes a Seifert matrix.
Note that $\det(B)=\Delta_K(t)$ is non--zero, i.e. the columns of $B$ are linearly independent over $\zt$.

It now follows from \cite[Theorem~6.1]{Li97}, that
\[ \ba{ll} &\mbox{minimal number of generators of column space of $A$}\,\,-\,\,\mbox{number of rows of $A$}\\
=& \mbox{minimal number of generators of column space of $B$}\,\,-\,\,\mbox{number of rows of $B$}.\ea \]
The latter is zero by the above, so we see that $m=n$.
Since $A$ is therefore a square matrix we see that $\det(A)=\Delta_K(t)$,
in particular the map given by the matrix $A$ is injective.

 We thus obtain the following commutative diagram of short exact sequences:
\[ \xymatrix{ 0\ar[r] &\zt^n\ar[d]\ar[r] & \zt^n \ar[d]^\cong_\Phi \ar[r] & H_1(M;\zt)\ar[r]\ar[d]^= &0 \\
0\ar[r] &\FH_2(W;\zt)\ar[r] & H_2(W,M;\zt)\ar[r] &H_1(M;\zt)\ar[r] &0.}\]
It now follows from the 5--lemma that the vertical map $\zt^n\to \FH_2(W;\zt)$ is an isomorphism, in particular $\FH_2(W;\zt)$ is free.
\end{proof}

The following result is one of the two homological ingredients in the proof of Theorem~\ref{mainthm}.

\begin{proposition}\label{prop:deltakj}
Let $K$ and $J$ be knots in $S^3$ and let $W$ be a $\zt$--cobordism between $M(K)$ and $M(J)$,
then
\[ \det(Q_W)\doteq\Delta_K(t)\cdot \Delta_J(t).\]
\end{proposition}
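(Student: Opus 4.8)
The plan is to identify $\det(Q_W)$ with the order of the cokernel of the adjoint map $\Psi$ from \eqref{equ:defqw}, and then to compute that cokernel by means of the long exact sequence of the pair $(W,\partial W)$.

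For the first step I would use that, by the previous lemma, $\FH_2(W;\zt)$ is a free $\zt$--module, say of rank $n$. Since $\zt$--torsion maps to zero in $\zt$, we have $\hom_{\zt}(H_2(W;\zt),\zt)=\hom_{\zt}(\FH_2(W;\zt),\zt)$, which is free of rank $n$, and the map $\Psi$ annihilates the $\zt$--torsion submodule of $H_2(W;\zt)$. Hence $\Psi$ has the same image, and therefore the same cokernel, as the induced map $\ol\Psi\colon\FH_2(W;\zt)\to\ol{\hom_{\zt}(\FH_2(W;\zt),\zt)}$. Choosing a basis of $\FH_2(W;\zt)$ and the dual basis of the target, $\ol\Psi$ is represented by the transpose of the Gram matrix of $Q_W$; in particular $\ol\Psi$ is a square presentation matrix of size $n$ for $\operatorname{coker}(\Psi)$, so Example~\ref{presentation} gives $\det(Q_W)\doteq\order(\operatorname{coker}(\Psi))$. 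This is the same computation as the one carried out inside the proof of Lemma~\ref{lem:detqw}.

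For the second step, recall that the second and third maps composing $\Psi$ in \eqref{equ:defqw} are isomorphisms, so $\operatorname{coker}(\Psi)$ is isomorphic to the cokernel of the inclusion--induced map $j\colon H_2(W;\zt)\to H_2(W,\partial W;\zt)$. In the long exact sequence
\[ H_2(W;\zt)\xrightarrow{j}H_2(W,\partial W;\zt)\xrightarrow{\partial}H_1(\partial W;\zt)\longrightarrow H_1(W;\zt)=0, \]
exactness together with the vanishing $H_1(W;\zt)=0$ gives $\operatorname{coker}(j)\cong\Im(\partial)=H_1(\partial W;\zt)$. Thus $\operatorname{coker}(\Psi)\cong H_1(\partial W;\zt)\cong H_1(M(K);\zt)\oplus H_1(-M(J);\zt)$.

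It remains to compute the orders. The inclusion $X(K)\to M(K)$ induces an isomorphism $H_1(X(K);\zt)\to H_1(M(K);\zt)$, so $\order(H_1(M(K);\zt))\doteq\Delta_K(t)$; similarly $\order(H_1(-M(J);\zt))\doteq\Delta_J(t)$, since reversing the orientation of $M(J)$ does not change the module $H_1(-M(J);\zt)$ (and in any event would change its order only by the involution $t\mapsto t^{-1}$, under which $\Delta_J$ is invariant up to a unit). Lemma~\ref{lem:orders} then gives $\order(H_1(\partial W;\zt))\doteq\Delta_K(t)\cdot\Delta_J(t)$, and combining with the first step yields $\det(Q_W)\doteq\Delta_K(t)\cdot\Delta_J(t)$; note this also re-proves that $\det(Q_W)\neq 0$. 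The one point that needs care is the first step: one must know that $\FH_2(W;\zt)$ really is free — this is exactly where the previous lemma is used — and that replacing $H_2(W;\zt)$ by $\FH_2(W;\zt)$ does not change $\operatorname{coker}(\Psi)$, which holds because the target of $\Psi$ is torsion--free; everything else is a routine diagram chase.
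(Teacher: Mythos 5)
Your argument is correct and follows essentially the same route as the paper: both proofs use the long exact sequence of the pair $(W,\partial W)$ together with $H_1(W;\zt)=0$ to identify the cokernel of $\FH_2(W;\zt)\to H_2(W,\partial W;\zt)$ with $H_1(M(K);\zt)\oplus H_1(M(J);\zt)$, interpret the Gram matrix of $Q_W$ as a square presentation matrix of that cokernel, and conclude via the order of the direct sum. The only cosmetic difference is that you phrase the first step as "$\det(Q_W)\doteq\order(\operatorname{coker}\Psi)$" and then compute $\operatorname{coker}\Psi$, while the paper directly writes down the short exact sequence $0\to\FH_2(W;\zt)\to H_2(W,\partial W;\zt)\to H_1(M(K);\zt)\oplus H_1(M(J);\zt)\to 0$ and reads off that the matrix of $Q_W$ presents the quotient.
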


\begin{proof}
Recall that the  last two maps in the definition of the intersection form $Q_W$, \eqref{equ:defqw}, are isomorphisms.
On the other hand the first map fits into the long exact sequence
\[ \ba{ccccccccccc} &H_2(\partial W;\zt)&\to &H_2(W;\zt)&\to& H_2(W,\partial W;\zt)&\to\\
\to & H_1(\partial W;\zt)&\to& H_1(W;\zt) &\to& \ea \]
In our case $\partial W=M(K)\sqcup M(J)$, it thus follows that
\[ H_i(\partial W;\zt)=H_i(M(K);\zt)\oplus H_i(M(J);\zt) \mbox{ for $i=1,2$,}\]
which is $\zt$--torsion. Since $H_2(W,\partial W;\zt)$ is free and since $H_1(W;\zt)=0$ we now see that the above long exact sequence descends to  the following short exact sequence:
\[ \ba{rcl} 0&\to& \FH_2(W;\zt)\to H_2(W,\partial W;\zt)\to\\
 H_1(M(K);\zt)\oplus H_1(M(J);\zt)&\to& 0.\ea  \]
Let $A$ be a matrix representing $Q_W$ for a basis for $\FH_2(W;\zt)$.
It follows from the definition of $Q_W$ that the matrix  $A$ also represents the map $ \FH_2(W;\zt)\to H_2(W,\partial W;\zt)$ for some appropriate bases. We thus see that
$A$ is a presentation matrix for the $\zt$--module
\[ H_1(M(K);\zt)\oplus H_1(M(J);\zt),\]
which by the definition of the Alexander polynomials implies that
\[\det(A)\doteq\Delta_K(t)\cdot \Delta_J(t).\]
\end{proof}


\subsection{Surgeries and intersection forms}\label{section:attach}

Let $M$ be a 3--manifold which is a homology $S^1\times S^2$.  Let $(c_1,\eps_1),\dots,(c_n,\eps_n)$ be framed oriented curves in $M$ with the following properties:
\bn
\item the framings are either $-1$ or $1$,
\item $c_1,\dots,c_n$ are homologically trivial in $M$.
\en
We then consider  the 4--manifold $W$ which is given by attaching 2--handles $h_1,\dots,h_n$ with framings $\eps_1,\dots,\eps_n$ to $M\times [0,1]$ along 
$c_1\times\{1\},\dots,c_n\times\{1\}\subset M\times\{1\}$. We identify $M$ with $M\times \{0\}$ and we denote by $M'$ the other boundary component of $W$.

It follows from (2) that   $H_1(W;\Z)=\Z$ and that the maps $H_1(M;\Z)\to H_1(W;\Z)$ and $H_1(M';\Z)\to H_1(W;\Z)$ are isomorphisms.
It furthermore follows from (2) that $c_1,\dots,c_n$  define elements of $H_1(M;\zt)$, which are well--defined up to a power of $t$.
It is straightforward to see that
\[ H_1(W;\zt)\cong H_1(M;\zt)/(c_1,\dots,c_n).\]

Next result is the second homological ingredient needed in the proof of Theorem~\ref{mainthm}.

\begin{proposition}\label{prop:detqw2}
 If $c_1,\dots,c_n$ generate $H_1(M(K);\zt)$, then $W$ is a $\zt$--cobordism between $M$ and $M'$, and
\[ \det(Q_W)\doteq\det\left(\{\lkt(c_i,c_j)\}_{ij}\right)\cdot \Delta_M(t)^2.\]
\end{proposition}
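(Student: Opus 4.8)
The plan is to verify the cobordism condition directly and then to read off $\det(Q_W)$ by applying Lemma~\ref{lem:detqw} to a well--chosen maximal linearly independent subset of $\FH_2(W;\zt)$. Since $H_1(W;\zt)\cong H_1(M;\zt)/(c_1,\dots,c_n)=0$ by hypothesis, and since (as recorded in Section~\ref{section:attach}) $H_1(W;\Z)\cong\Z$ with the inclusions inducing isomorphisms $H_1(M;\Z)\to H_1(W;\Z)\leftarrow H_1(M';\Z)$ compatible with the preferred identifications with $\Z$, the manifold $W$ is a $\zt$--cobordism between $M$ and $M'$; in particular $\FH_2(W;\zt)$ is free by the previous lemma. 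Attaching the $n$ two--handles along the null--homologous (hence equivariantly lifting) curves $c_i$ also shows $H_k(W,M;\zt)=0$ for $k\neq 2$ and $H_2(W,M;\zt)\cong\zt^n$, with basis the cores $e_1,\dots,e_n$ and connecting map $\partial\colon\zt^n\to H_1(M;\zt)$ sending $e_i$ to $[c_i]$ (up to a unit). As $H_2(M;\zt)$ is $\zt$--torsion while $H_2(W,M;\zt)$ is torsion free, the homology sequence of $(W,M)$ identifies $\FH_2(W;\zt)$ with $\ker\partial$, which has rank $n$; and since the $c_i$ generate $H_1(M;\zt)$, the map $\partial$ is onto, so $\zt^n/\ker\partial\cong H_1(M;\zt)$.

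Next I would take $v_i\in\FH_2(W;\zt)$ to be the class of the $2$--cycle obtained by capping the core $D_i$ with multiplicity $\Delta_M$ (that is, $\sum_a\delta_a t^a D_i$ if $\Delta_M=\sum_a\delta_a t^a$) with an embedded surface $F_i\subset M\times[0,1]\subset W$ such that $\partial F_i=\Delta_M\cdot c_i$, which exists because $\Delta_M$ annihilates $H_1(M;\zt)$ (Lemma~\ref{annihilates}). Under the identification above, $v_i$ corresponds to $\Delta_M e_i\in\ker\partial$, so the $v_i$ form a maximal linearly independent set and $\FH_2(W;\zt)/(v_1,\dots,v_n)\cong\ker\partial/\Delta_M\zt^n$. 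Feeding the short exact sequence
\[0\to\ker\partial/\Delta_M\zt^n\to\zt^n/\Delta_M\zt^n\to H_1(M;\zt)\to 0\]
into Lemma~\ref{lem:orders}, using Example~\ref{presentation} for $\order(\zt^n/\Delta_M\zt^n)\doteq\Delta_M^n$ and $\order(H_1(M;\zt))\doteq\Delta_M$, I get $f:=\order\big(\FH_2(W;\zt)/(v_1,\dots,v_n)\big)\doteq\Delta_M^{n-1}$.

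The heart of the argument is the Gram matrix $(Q_W(v_i,v_j))_{ij}$, which I would compute by equivariant intersection numbers of the capped surfaces. For $i<j$ the core pieces lie in disjoint handles, so $Q_W(v_i,v_j)$ is carried by $F_i\cdot F_j$; pushing the interior of $F_i$ above that of $F_j$ in the collar $M\times[0,1]$, the only intersections occur where $F_i$ crosses the collar of $F_j$ along a parallel copy of $\Delta_M c_j$, and the equivariant count there equals $\Delta_M\cdot(F_i\cdot c_j)$, which by the definition of $\lkt$ is $\Delta_M\cdot\Delta_M\lkt(c_i,c_j)$. The diagonal entry $Q_W(v_i,v_i)$ in addition picks up the self--intersection $\eps_i$ of the core disk of the $\eps_i$--framed handle and comes out to $\Delta_M^2\lkt(c_i,c_i)$ in the framed sense of Section~\ref{section:basedcs}. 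After normalising so that $\Delta_M(t^{-1})\doteq\Delta_M(t)$, the matrix $\Delta_M^2(\lkt(c_i,c_j))_{ij}$ is hermitian; since $Q_W$ is hermitian too and the two matrices agree on and above the diagonal, they coincide, and in particular $\det(Q_W(v_i,v_j))_{ij}\doteq\Delta_M^{2n}\det(\lkt(c_i,c_j))_{ij}$.

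Finally, Lemma~\ref{lem:detqw} together with $f\doteq\Delta_M^{n-1}$ and $\ol{\Delta_M}\doteq\Delta_M$ (a consequence of the non-singularity and hermitian symmetry of the Blanchfield form of $M$) gives
\[\det(Q_W)\cdot\Delta_M^{2(n-1)}\doteq\det(Q_W(v_i,v_j))_{ij}\doteq\Delta_M^{2n}\det(\lkt(c_i,c_j))_{ij},\]
hence $\det(Q_W)\doteq\Delta_M^2\det(\lkt(c_i,c_j))_{ij}$, as claimed. I expect the main obstacle to be the geometric identification of the Gram matrix with $\Delta_M^2(\lkt(c_i,c_j))$: one must arrange the capped surfaces $\hat F_i$ and their parallel copies so that the handle framings $\eps_i$ appear correctly on the diagonal, and carefully convert intersection numbers in $\wti W$ into values of $\lkt$ while tracking the $\Delta_M$--multiplicities and orientations; this is probably cleanest in the based--immersed--surface formalism of Section~\ref{section:basedcs}, in the spirit of \cite{COT03}.
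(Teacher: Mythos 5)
Your proof is correct and takes essentially the same route as the paper: you pick the classes in $\FH_2(W;\zt)$ that map to $\Delta_M$ times the handle cores, compute their Gram matrix as $\Delta_M^2(\lkt(c_i,c_j))$ by capping in the collar, compute the order of the quotient $\FH_2(W;\zt)/(v_1,\dots,v_n)\doteq\Delta_M^{n-1}$ from the same short exact sequence, and conclude with Lemma~\ref{lem:detqw}. The only cosmetic difference is that you phrase the order computation via the identification $\FH_2(W;\zt)\cong\ker\partial\subset\zt^n$ rather than through the commutative diagram of free submodules the paper draws, but the content is identical.
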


\begin{proof}
Throughout the proof we write $\Delta=\Delta_M(t)$.
It follows from the definitions and the discussion preceding the lemma that $W$ is indeed a $\zt$--cobordism between $M$ and $M'$.
We  consider the short exact sequence \eqref{exactsequence}
\[ 0\to \FH_2(W;\zt)\xrightarrow{\iota} H_2(W,M;\zt)\to H_1(M;\zt)\to 0.\]
It is clear that the cores of the 2--handles $h_1,\dots,h_n$ give rise to a generating set for $H_2(W,M;\zt)$.
By a slight abuse of notation we denote the cores of the 2--handles by $h_1,\dots,h_n$ as well.
Note that each $h_i$ then naturally defines an element $[h_i]\in H_2(W,M;\zt)$. 
By Lemma~\ref{annihilates},
there exist $k_1,\dots,k_n\in \FH_2(W;\zt)$, such that $\iota(k_i)=\Delta \cdot [h_i]\in H_2(W,M;\zt)$, $i=1,\dots,n$.

\begin{lemma}
\[ k_i\cdot k_j=\Delta^2\cdot \lkt(c_i,c_j).\]
\end{lemma}


\begin{proof}
  We denote the infinite cyclic covers of $M$ and $X=X(K)$ by
  $\wti{M}$ and $\wti{X}$. By Lemma~\ref{annihilates}
  we can find surfaces
  $F_1,\dots,F_n$ in $\wti{M}$ such that $\partial F_i=\Delta c_i$.
  We can arrange the surfaces such that $F_i$ and $t^kc_j$ are in
  general position for any $i,j,k$.

  We first consider the case $i\ne j$.  We then consider the surface
  \[ T_i:=\Delta\cdot h_i \,\,\cup \,\,(\Delta\cdot c_i\times
  [0,\frac{1}{2}])\,\,\cup \,\,(F_i\times \frac{1}{2}) \] in $\wti{W}$
  where we think of $\Delta\cdot h_i$ and $\Delta\cdot c_i$ as a
  disjoint union of appropriate translates of the surface $h_i$
  respectively the curve $c_i$.  Note that the surface $T_i$ is closed
  and the image of $[T_i]$ in $H_2(W,M;\zt)$ is the same as the image
  of $\Delta[h_i]$ in $H_2(W,M;\zt)$.  Since
  $\FH_2(W;\zt)\xrightarrow{\iota} H_2(W,M;\zt)$ is injective it now
  follows that $T_i$ represents the class $k_i$.  Similarly we
  consider the surface
  \[ T_j:=\Delta\cdot h_j \,\,\cup \,\,(\Delta\cdot c_j\times
  [0,1])\,\,\cup \,\,(F_j\times 1), \] where $F_j$ is a surface in
  $\wti{M}$ which has boundary $\Delta\cdot c_j$.  Note that the
  surface $T_j$ is closed and represents the class $k_j$.

  We can thus use the surfaces $T_i$ and $T_j$ to calculate $k_i\cdot
  k_j$. But it is clear from the definitions that
  \[ T_i\cdot T_j=(\Delta F_i\times \frac{1}{2}) \cdot (c_j\times
  \frac{1}{2}),\] but this clearly equals $\Delta\cdot (F_i\cdot c_j)=\Delta^2\cdot \lkt(c_i,c_j)$.

  The case $i=j$ can be proved completely analogously by constructing
  an appropriate surface $T_i'$ using the longitude of $c_i$ with
  framing $\eps_i$ which connects up with the core of the 2--handle
  which we had attached to $c_i$ with framing $\eps_i$. We leave the
  details to the reader.  This concludes the proof of the lemma.
\end{proof}

\begin{lemma}
The order of the $\zt$--module
\[ \FH_2(W;\zt)/(k_1,\dots,k_n)\]
equals $\Delta^{n-1}$.
\end{lemma}

\begin{proof}
It follows from  \eqref{exactsequence} and from the definitions that we have the following commutative diagram of maps where the horizontal sequences are exact:
\[ \xymatrix{ 
0\ar[r]\ar[d]& \bigoplus_{i=1}^n k_i\zt\ar[r]^\iota\ar[d] &  \bigoplus_{i=1}^n \Delta h_i\ar[d]\zt\ar[r] &0\ar[d]&&\\ 
0\ar[r]& \FH_2(W;\zt)\ar[r]^{\iota} &H_2(W,M;\zt)\ar[r]& H_1(M;\zt)\ar[r]& 0.}\]
It then follows that the following sequence of maps
  \[\ba{rcl} 0&\to& \FH_2(W;\zt)/(k_1,\dots,k_n)\to H_2(W,M;\zt)/(\Delta h_1,\dots,\Delta h_n)\\
  &\to& H_1(M;\zt)\to 0.\ea \] 
  is well--defined and  exact.  By the
  multiplicativity of orders (see Lemma \ref{lem:orders}) it follows
  that
  \[ \ba{ll} \order(H_2(W,M;\zt)/(\Delta h_1,\dots,\Delta h_n))\\
  \hspace{4cm} \doteq \order(\FH_2(W;\zt)/(k_1,\dots,k_n))\,\,\cdot
  \,\,\order(H_1(M;\zt)).\ea \] But the order on the left is clearly
  $\Delta^n$ and the order of $H_1(M;\zt)$ equals $\Delta$ by the
  definition of $\Delta$.  This concludes the proof of the lemma.
\end{proof}

Using Lemma \ref{lem:detqw} we now see that
\[ \ba{rcl} \det(Q_W)&\doteq&\det(\{\Delta^2\lkt(c_i,c_j)\}_{ij})\cdot \Delta^{-2(n-1)}=\det(\{\lkt(c_i,c_j)\}_{ij})\cdot \Delta^{2n}\cdot \Delta^{-2(n-1)}\\
&=&\det(\{\lkt(c_i,c_j)\}_{ij})\cdot \Delta^{2}.\ea \]
\end{proof}

\section{The  main theorem}\label{section:proof}

\subsection{Statement of the main theorem}

In this section we will state a slightly stronger version  of our main theorem.
In order to state the theorem we first have to  recall the following definition: A crossing change is a \emph{positive crossing change} if it turns a negative crossing into a positive crossing. Otherwise we refer to the crossing change as a \emph{negative crossing change}.

\smallskip
\begin{pspicture}(-3,-0.5)(6,2.2)
\rput(4,0){\psline(-4.5,0)(-3.75,0.75)\psline[arrowsize=6pt]{->}(-3.25,1.25)(-2.5,2)
\psline[arrowsize=6pt]{->}(-4.5,2)(-2.5,0)
\rput(-3.5,0){\psscalebox{1.25}{``$-$''}}
\psline(3,2)(3.75,1.25)\psline[arrowsize=6pt]{->}(4.25,0.75)(5,0)
\psline[arrowsize=6pt]{->}(3,0)(5,2)
\rput(4,0){\psscalebox{1.25}{``$+$''}}
\psline[linestyle=dashed,arrowsize=3pt]{->}(-2,1.1)(2,1.1)\rput(0,1.35){\psscalebox{0.8}{positive crossing change}}
\psline[linestyle=dashed,arrowsize=3pt]{<-}(-2,0.9)(2,0.9)\rput(0,0.65){\psscalebox{0.8}{negative crossing change}}
}
\end{pspicture}

The following theorem is now our main result, it clearly implies  Theorem \ref{mainthm} from the introduction.

\begin{theorem}\label{mainthm2}
Let $K$ be a knot and let $A=A(t)$ be an $n\times n$--matrix over $\zt$ such that $Bl(K)\cong \l(A)$ and such that $A(1)$ is diagonalizable over $\Z$.
We denote the number of positive eigenvalues of $A(1)$ by $n_+$ and we denote the number of negative eigenvalues by $n_-$.
Then $K$ can be turned into a knot with Alexander polynomial one using $n_+$ negative crossing changes and $n_-$ positive crossing changes.
\end{theorem}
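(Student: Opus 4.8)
The strategy is to encode the matrix $A$ geometrically as a configuration of crossing-change circles in $S^3\sm K$, build from them a $\zt$--cobordism, and then read off the Alexander polynomial of the resulting knot via Propositions~\ref{prop:deltakj} and~\ref{prop:detqw2}. First I would normalise $A$. Since $A$ is hermitian, $A(1)$ is a symmetric integral matrix, and $\det A(1)=\pm\Delta_K(1)=\pm1$ because $\det A\doteq\ord(\zt^n/A\zt^n)=\ord(H_1(X(K);\zt))=\Delta_K$, the module isomorphism coming from $Bl(K)\cong\l(A)$ (Example~\ref{presentation}). As $A(1)$ is diagonalisable over $\Z$ it is therefore $\Z$--congruent to $\diag(\e_1,\dots,\e_n)$ with every $\e_i\in\{\pm1\}$ and exactly $n_+$ of them equal to $+1$. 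Conjugating $A$ by the corresponding element of $\gl(n,\Z)\subset\gl(n,\zt)$ changes neither the isometry class of $\l(A)$ nor the integers $n_\pm$, so from now on we assume $A(1)=\diag(\e_1,\dots,\e_n)$.

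The heart of the proof is a geometric realisation step, which I also expect to be the main obstacle. The claim is that there exist $n$ pairwise disjoint embedded $2$--discs $D_1,\dots,D_n\subset S^3$, each meeting $K$ transversally in exactly two points of opposite sign, such that, writing $c_i=\partial D_i$ (a small unknot with $\lk(c_i,K)=0$) equipped with the framing $\e_i$, the classes $[c_1],\dots,[c_n]$ generate $H_1(X(K);\zt)$ and there is an isometry $Bl(K)\cong\l(A)$ sending $[c_i]$ to the $i$-th standard generator, with moreover
\[ \lkt\big((c_i,\e_i),(c_j,\e_j)\big)=(A^{-1})_{ij}\in\tfrac1{\Delta_K}\zt,\qquad i,j=1,\dots,n,\]
as an equality, not merely modulo $\zt$. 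The isometry $Bl(K)\cong\l(A)$ already forces the above congruence modulo $\zt$ and, since $A(1)=\diag(\e_i)$, the remaining discrepancy is the framing contribution on the diagonal (already accounted for by the choice $\e_i$, as $\lkt((c_i,\e_i),(c_i,\e_i))|_{t=1}=\e_i=(A^{-1})_{ii}|_{t=1}$) and lies in $(t-1)\zt$ off the diagonal; such a discrepancy can be removed by local modifications of the $c_i$ inside $S^3\sm K$ (pushing fingers of one circle along loops in $X(K)$ and across another) that preserve the homology classes, the vanishing of the ordinary linking numbers, and the property of bounding disjoint crossing-change discs. Carrying out this realisation simultaneously for all $n$ curves is the technical core of the argument, and one cannot decouple it into single crossing changes, since after one surgery the remaining circles need no longer be crossing-change circles for the new knot.

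Granting Step~2, form $W$ by attaching $2$--handles $h_1,\dots,h_n$ with framings $\e_1,\dots,\e_n$ to $M(K)\times[0,1]$ along $c_1\times\{1\},\dots,c_n\times\{1\}$, and let $M'$ be the other boundary component. The $[c_i]$ generate $H_1(M(K);\zt)=H_1(X(K);\zt)$, so Proposition~\ref{prop:detqw2} applies: $W$ is a $\zt$--cobordism between $M(K)$ and $M'$ with
\[ \det(Q_W)\ \doteq\ \det\big(\{\lkt(c_i,c_j)\}_{ij}\big)\cdot\Delta_K(t)^2\ =\ \det(A^{-1})\cdot\Delta_K(t)^2\ \doteq\ \Delta_K(t),\]
using the realisation of the $\lkt$--matrix as $A^{-1}$ and $\det A\doteq\Delta_K$ once more.

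Finally, because the $D_i$ are disjoint discs each meeting $K$ in two points of opposite sign, the $c_i$ form an unlink in $S^3$; performing $\e_i$--framed surgery on all of them simultaneously returns $S^3$ and changes $K$ into a knot $K'$ obtained from $K$ by $n$ crossing changes, and since $\lk(c_i,K)=0$ the zero-framed surgery along $K$ is unaffected, so $M'=M(K')$. With the standard conventions, the $n_+$ surgeries of framing $+1$ are negative crossing changes and the $n_-$ surgeries of framing $-1$ are positive crossing changes. Now $W$ is a $\zt$--cobordism between $M(K)$ and $M(K')$, so Proposition~\ref{prop:deltakj} gives $\det(Q_W)\doteq\Delta_K(t)\Delta_{K'}(t)$; comparing with the computation above forces $\Delta_{K'}(t)\doteq1$, hence $\Delta_{K'}(t)=1$ after normalisation. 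Thus $K$ can be turned into an Alexander polynomial one knot by $n_+$ negative and $n_-$ positive crossing changes, as claimed.
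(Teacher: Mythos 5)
Your high-level skeleton matches the paper's: normalise $A$ so that $A(1)$ is diagonal with $\pm1$ entries, geometrically realise the entries of $A^{-1}$ as twisted linking numbers of boundaries of crossing-change discs, build the 2-handle cobordism $W$, and then pinch $\det Q_W$ between the two computations of Propositions~\ref{prop:detqw2} and~\ref{prop:deltakj} to conclude $\Delta_{K'}\doteq1$. That part is right, and the sign bookkeeping ($+1$-framed twist = negative crossing change) is as in the paper.

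The gap is in the geometric realisation step, which you correctly flag as the technical core but then state in a form that is stronger than what the paper proves and almost certainly stronger than what is achievable. You assert the existence of \emph{pairwise disjoint} nice discs $D_1,\dots,D_n$ whose boundaries have $\lkt(c_i,c_j)=(A^{-1})_{ij}$ on the nose, and you claim the local finger moves needed to kill the $(t-1)\zt$ discrepancies ``preserve \dots\ the property of bounding disjoint crossing-change discs.'' They do not: the moves the paper uses to adjust $\lkt$ (the type $F_1$, $F_2$ moves of Section~\ref{sec:typeRF}, combined with the type $T(n)$ move) drag a piece of one disc along a loop that winds around $K$ and then pushes across another disc, and this \emph{necessarily} creates new intersections between the discs. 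What the paper establishes instead (Lemma~\ref{lem:maintechnicallem}) is the strictly weaker ordering condition that $D_i$ \emph{precedes} $D_j$ for $i<j$, i.e.\ $D_j\cap\partial D_i=\emptyset$, while the interiors are allowed to meet. This is exactly the condition needed for the iterated-surgery argument to go through: performing the $\eps_1$-twist along $D_1$ leaves $\partial D_2,\dots,\partial D_n$ unknotted and preserves the precedence of the remaining discs. Insisting on disjointness skips the hard combinatorics of Section~\ref{proofoflemma} and, as stated, is an unsupported (and likely false) claim. If you can prove disjointness you would have a cleaner statement than the paper's; but the burden is squarely on you to show the finger moves can be routed to avoid all other discs, and nothing in your outline addresses this.

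A secondary remark: you say one cannot decouple the problem into single crossing changes because after one surgery the remaining circles need not be crossing-change circles for the new knot. In fact the paper's precedence condition is designed precisely so that the \emph{surgeries} can be performed one at a time; what cannot be decoupled is the \emph{realisation} of all $n$ discs relative to the original $Bl(K)$, which is indeed done simultaneously in Lemma~\ref{lem:maintechnicallem}. Your sentence conflates these two steps.
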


There are two ingredients in the proof of Theorem. The homological part was given in Proposition~\ref{prop:deltakj} and Proposition~\ref{prop:detqw2}.
The main topological tool 
will be Lemma \ref{lem:maintechnicallem} which we will state in the following section.

\begin{remark}
 The theorem applies also to knots in $\Z$--homology sphere. In general, 
such a knot can not be unknotted using `crossing changes' (i.e. using surgeries along curves which bound nice disks) since the knot might not even be null-homotopic. But any knot can be turned into Alexander polynomial one knots, using $n(K)$ unknotting moves.
\end{remark}

\subsection{The  main technical lemma}\label{section:statementtechnical}

In order to state our main technical lemma we need a few more definitions:

\begin{definition}
Let $K\subset S^3$ be a knot.
A (based) disk $D\subset S^3$ is called \emph{nice} if the disk is embedded (that is the unbased disk is embedded), 
if it intersects $K$ transversely and if it intersects $K$ exactly twice
with opposite signs.
\end{definition}

\begin{definition}
Let $D,D'$ be embedded disks in $S^3$. We say that the disk $D$ \emph{precedes} the disk $D'$ if
$D'$ and $D$ intersect transversely and if $D'\cap \partial D=\emptyset$.
\end{definition}

As an example, consider the disks in Figure \ref{fig:bluegreen},
\begin{figure}
\begin{pspicture}(-5,-2.5)(5,2.5)
\newrgbcolor{blue1}{0.0000 0.0000 0.500}
\newrgbcolor{blue2}{0.8 0.8 1.0}
\newrgbcolor{green1}{0.0 0.5 0.0}
\newrgbcolor{green2}{0.8 1.0 0.8}
\psellipse[linecolor=green1,fillcolor=green2,linewidth=2pt,fillstyle=solid](0,0)(0.6,2.0) 
\psellipse[linecolor=white,fillcolor=white,linewidth=2pt,fillstyle=solid](0,0)(2.2,0.7) 
\psellipse[linecolor=blue1,fillcolor=blue2,linewidth=2pt,fillstyle=vlines,hatchcolor=blue,hatchwidth=0.01,hatchsep=0.05](0,0)(2.0,0.6) 
\psellipticarc[linecolor=white,fillstyle=none,linewidth=2pt](0,0)(2.0,0.6){70}{110} 
\begin{psclip}{ \psellipse[linestyle=none](0,0.2)(2.0,0.6)}
\psellipse[linecolor=green1,fillcolor=green2,linewidth=2pt,fillstyle=solid](0,0)(0.6,2.0) 
\end{psclip}
\psellipticarc[linecolor=black,linewidth=1.5pt, linestyle=dashed](0,0)(1.5,0.43){220}{320}
\end{pspicture}
\caption{The blue (the dashed one) and green  disks.}
\label{fig:bluegreen}
\end{figure}
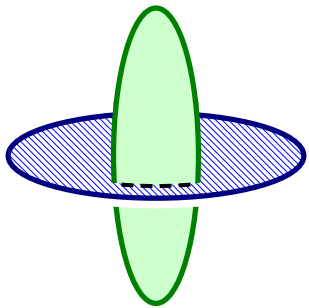
then the blue (dashed) disk precedes the green (solid) disk, but not vice versa.

We can now state our main technical lemma. It will be proved in Section~\ref{proofoflemma}.

\begin{lemma}\label{lem:maintechnicallem}
Let $K$ be a knot and let $x_1,\dots,x_n$ be elements in $H_1(X(K);\zt)$.
Let $p_{ij}(t)\in \zt$, $i,j\in \{1,\dots,n\}$ be such that
\[ Bl(x_i,x_j)= \frac{p_{ij}(t)}{\Delta_K(t)} \in \Q(t)/\zt \mbox{ and }p_{ij}(t)=p_{ji}(t^{-1})\]
for any $i$ and $j$. Then  there exists an ordered set  $\{D_1,\dots,D_n\}$ of
 based nice disks with the following properties:
\bn
\item for $i<j$ the disk $D_i$ precedes $D_j$,
\item for any $i$ the based curve $c_i:=\partial D_i$ represents $x_i$,
\item if for $i=1,\dots,n$ we equip $c_i=\partial D_i$ with the framing $p_{ii}(1)$, then
\[ \lkt(c_i,c_j)=\frac{p_{ij}(t)}{\Delta_K(t)}\in \Q(t),\]
for any $i$ and $j$.
\en
\end{lemma}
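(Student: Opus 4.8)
The plan is to construct the disks one at a time, by induction on $n$, using three geometric moves — crossing changes on the $c_i$ performed away from $K$, finger/piping moves to control intersections with $K$, and cancelling pairs of fingers pushed around $K$ — together with a linking‑number bookkeeping. Before starting, one observes a normalisation: if $D_i$ precedes $D_j$ then $c_i\cap D_j=\emptyset$, so $\lk(c_i,c_j)=c_i\cdot D_j=0$, and since $\lkt(c_i,c_j)|_{t=1}=\lk(c_i,c_j)$ and $\Delta_K(1)=1$, property~(3) forces $p_{ij}(1)=0$ for $i\neq j$. As a representative $p_{ij}$ may be changed by any multiple of $\Delta_K$ without changing $Bl(x_i,x_j)$, and $\Delta_K(1)=1$, we may and do assume $p_{ij}(1)=0$ whenever $i\neq j$; this holds automatically in the application to Theorem~\ref{mainthm2}, where the $p_{ij}$ are signed minors of a matrix that is diagonal at $t=1$.

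\emph{Step 1 (one class with prescribed self‑linking).} The first thing I would prove is: for any $x\in H_1(X(K);\zt)$ and any $r\in\tfrac1{\Delta_K}\zt$ with $r\equiv Bl(x,x)\pmod{\zt}$ and $r=\ol r$, there is a based nice disk $D$ with $[\partial D]=x$ such that, with framing $r(1)$, one has $\lkt(\partial D,\partial D)=r$. To produce a nice disk with $[\partial D]=x$: represent $x$ by an embedded loop $c_0\subset X(K)$, which is null‑homologous in $X(K)$ (it lifts to $\wti X$), so $\lk(c_0,K)=0$; make $c_0$ unknotted in $S^3$ by crossing changes inside balls disjoint from $K$, which lift homeomorphically to $\wti X$ and hence fix $x$; let $D$ be a disk bounded by the resulting unknot, isotoped to meet $K$ transversely in $2m$ points with cancelling signs. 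If $m>1$, remove a cancelling pair consecutive along $K$ by a finger/piping move of $D$ along an arc of $D$ joining the two points, after first unknotting and unlinking the resulting finger loop by interior crossing changes of $D$ (which fix $\partial D$ and its $K$‑intersections); iterating yields $m=1$, a nice disk — unless $x=0$, in which case one starts from a small nice disk clasping two strands of $K$ created by a Reidemeister~II move. Finally $\lkt(\partial D,\partial D)$ is automatically $\equiv Bl(x,x)\pmod{\zt}$; one hits $r$ exactly by (a) taking the framing to be $r(1)$, so the discrepancy lies in $(t-1)\zt$, and (b) pushing small cancelling pairs of fingers of $\partial D$ around $K$: a cancelling pair at ``$t$‑distance $k$'' leaves $[\partial D]$ unchanged but changes $\lkt(\partial D,\partial D)$ by a symmetric element of $(t-1)\zt$ of the form $\pm(t^{k}+t^{-k}-2)$, and these span over $\Z$ the symmetric part of $(t-1)\zt$, so a suitable composition reaches $r$; each such move keeps $D$ nice since new $K$‑intersections are introduced and destroyed in cancelling pairs.

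\emph{Step 2 (the inductive step).} Assume $D_1,\dots,D_{n-1}$ constructed. By Step~1 choose a nice disk $D_n$ with $[\partial D_n]=x_n$. Now correct all linking involving $c_n=\partial D_n$ at once: $\lkt(c_n,c_j)$ equals $Bl(x_n,x_j)$ modulo $\zt$, hence differs from $p_{nj}/\Delta_K$ by an element of $\zt$ vanishing at $t=1$ by the normalisation; by a combination of cancelling finger pairs of $c_n$ pushed around $K$, placed near the various $c_j$ and near $K$ itself, one simultaneously prescribes $\lkt(c_n,c_1),\dots,\lkt(c_n,c_n)$ (solving the corresponding finite system of corrections in $(t-1)\zt$), while not disturbing the $\lkt(c_i,c_\ell)$ with $i,\ell<n$ because each move is supported in a small ball meeting only $c_n$; note $\lkt(c_j,c_n)=\ol{\lkt(c_n,c_j)}$ then equals $p_{jn}/\Delta_K$ automatically. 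To enforce the precedes condition, for each $j<n$ the finite set $c_j\cap D_n$ has algebraic count $\lk(c_j,c_n)=p_{jn}(1)=0$, so it can be removed in pairs by isotoping $D_n$ rel $\partial D_n$ (finger moves of $D_n$ past $c_j$, after unknotting/unlinking the finger loops by interior crossing changes of $D_n$ fixing $c_n$ and its $K$‑intersections); doing this for $j=1,\dots,n-1$ and keeping $D_n$ transverse to each $D_j$ makes $D_i$ precede $D_j$ for all $i<j$ and completes the induction.

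\emph{Where the difficulty lies.} The crux is the flexibility used in Steps~1 and~2: that cancelling pairs of fingers around $K$ realise \emph{every} needed correction, inside $(t-1)\zt$, to the equivariant linking numbers while keeping the disk nice and its homology class fixed. This requires a careful computation of how such a finger meets a surface cobounding $\Delta_K\cdot c$ and of the resulting change in $\lkt$, and the (routine but fiddly) verification that none of the finger/piping moves — those cutting the number of $K$‑intersections down to two, those performing the linking corrections, and those pushing $D_n$ off the earlier boundaries — ever destroys niceness, embeddedness, or the data already arranged; the rest is general position and localisation of each move to a small ball.
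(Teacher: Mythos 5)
Your approach is genuinely different from the paper's, and I think it has a gap that is not ``routine but fiddly'' but an actual missing argument.

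The paper does not build the disks one at a time. It first invokes a lemma of Fogel to produce $n$ \emph{pairwise disjoint} nice based disks $D_1,\dots,D_n$ with $[\partial D_i]=x_i$ (the commutator $m_id_im_i^{-1}d_i^{-1}$ construction placed in disjoint neighbourhoods). Disjointness gives the ``precedes'' property for free, and the paper then sets up a rigid local picture --- a ``standard segment'' of $K$ containing all $2n$ intersection points with the disks --- and corrects the twisted linking numbers by explicit local moves ($R_1,R_2,F_1,F_2$, and an isotopy $T(n)$), proving directly that each move preserves the ordering $D_i$ precedes $D_j$ for $i<j$. The precedes condition is thus never something one has to \emph{restore}; it is built in and maintained. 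Your computation that the corrections span the right sublattice of $(t-1)\Z[t^{\pm1}]$ (symmetric part for the diagonal terms, all of $(t-1)\Z[t^{\pm1}]$ off the diagonal) is consistent with the paper's, just with a different generating set ($t^k+t^{-k}-2$ versus the paper's $t^i-t^{i-1}-t^{-(i-1)}+t^{-i}$), and your normalisation $p_{ij}(1)=0$ for $i\ne j$ is a genuinely useful observation that the paper leaves implicit.

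The gap is in your Step 2, where you write that for each $j<n$ the set $c_j\cap D_n$ has algebraic count $\lk(c_j,c_n)=0$ and ``so it can be removed in pairs by isotoping $D_n$ rel $\partial D_n$.'' That implication is false in general: if $c_j\cup c_n$ is, say, a Whitehead link, both components are unknotted and $\lk(c_j,c_n)=0$, yet \emph{every} embedded disk bounding $c_n$ meets $c_j$. Vanishing of the algebraic intersection number is the first obstruction, not the only one, and in dimension $3$ there is no Whitney trick. Your parenthetical remedy --- ``finger moves of $D_n$ past $c_j$, after unknotting/unlinking the finger loops by interior crossing changes of $D_n$ fixing $c_n$ and its $K$-intersections'' --- is not an isotopy, and you give no argument that these self-crossing changes of $D_n$ actually eliminate the obstruction while keeping $D_n$ embedded, nice, and not reintroducing intersections with $c_i$ for $i<j$ (the Whitney disk for a cancelling pair may be forced to meet $K$ or the other $c_i$'s, and choosing it to miss them is a nontrivial $\pi_1$ condition). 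The clean way to avoid all this is exactly what the paper does: start with disjoint disks and only perform moves that are manifestly compatible with the ordering. If you want to keep the one-disk-at-a-time induction, you would at least need to choose the initial $D_n$ to be disjoint from $c_1,\dots,c_{n-1}$ (which the Fogel construction allows, by placing $d_n$ and $m_n$ in general position to those curves) and then verify that your correcting fingers can be kept off $c_1,\dots,c_{n-1}$ throughout, rather than appealing to algebraic count $=0$ after the fact.
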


\subsection{Proof of Theorem \ref{mainthm2} assuming Lemma \ref{lem:maintechnicallem}}

We will now prove  Theorem \ref{mainthm2} using  Lemma \ref{lem:maintechnicallem}.
Let $K$ be a knot. We write $\Delta=\Delta_K(t)$.
 Let $A=A(t)$ be an $n\times n$--matrix over $\zt$ such that $Bl(K)\cong \l(A)$ and such that $A(1)$ is diagonalizable over $\Z$.
We denote the number of positive eigenvalues of $A(1)$ by $n_+$ and we denote the number of negative eigenvalues by $n_-$.

Note that since  $A(1)$ is diagonalizable over $\Z$ we can  find an invertible matrix $P$ over $\Z$ such that $PA(1)P^t$ is diagonal over $\Z$.
We can thus, without loss of generality assume, that $A(1)$ is diagonal.

The matrix $A(t)$ is in particular a presentation matrix for the Alexander module.
It follows that $\det(A(t))=\pm \Delta_K(t)$ and in particular $\det(A(1))=\pm 1$.
The entries on the diagonal of $A(1)$ are therefore either $+1$ or $-1$.
 We now denote by $\eps_1,\dots,\eps_n$ the diagonal entries.
Given $i,j\in \{1,\dots,n\}$ we  denote by $b_{ij}(t)\in \zt$ the polynomial which satisfies
\[ \mbox{$ij$--entry of $A(t)^{-1}$}=\frac{b_{ij}(t)}{\Delta}.\]

We denote by $e_1,\dots,e_n$ the canonical generating set of $\zt^n/A\zt^n$ and we denote by $x_1,\dots,x_n$ the images of $e_1,\dots,e_n$
under the isometry $\l(A)\to Bl(K)$.
By Lemma \ref{lem:maintechnicallem}
 there exists an ordered  set  $\{D_1,\dots,D_n\}$ of
 based nice disks with the following properties:
\bn
\item for any $i<j$ the disk $D_i$ precedes $D_j$,
\item for any $i$ the based curve $c_i:=\partial D_i$ represents $x_i$,
\item if for $i=1,\dots,n$ we equip $c_i=\partial D_i$ with the framing $b_{ii}(1)$, then
\[ \lkt(c_i,c_j)=\frac{b_{ij}(t)}{\Delta},\]
for any $i$ and $j$.
\en

We now consider the disk $D_1$. After an isotopy of $S^3$ we can assume that it is `standard' as in Figure \ref{fig:standard} on the left.
\begin{figure}[h]
\begin{tabular}{cc}
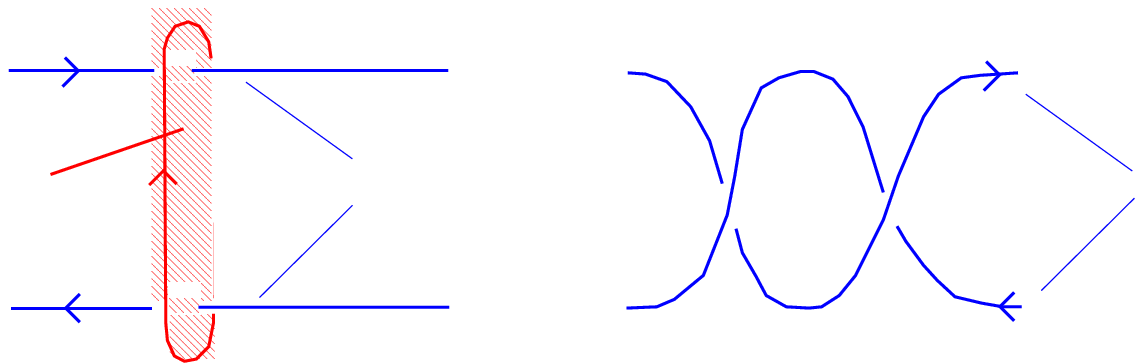
\end{tabular}
\caption{A nice disk in standard position and the result of adding a full $+1$--twist along the disk.}
\label{fig:standard}
\end{figure}
We now perform $\eps_1$--surgery on the unknot $c_1=\partial D_1$.
The resulting 3--manifold is again $S^3$.
Furthermore the knot $K_1$,
which is defined as the image of $K$ in the surgery $S^3$, is obtained from $K_0:=K$ through adding a full $\eps_1$--twist along the disk
(see Figure \ref{fig:standard}). Adding a full $\eps_1$--twist
corresponds to  a $(-\eps_1)$--crossing change in an appropriate diagram of $K$.
The fact that $D_1$ precedes $D_2,\dots,D_n$  implies
that the disks $D_2,\dots,D_n$ are `unaffected' by the surgery, in particular for $j=2,\ldots,n$, $\partial D_j$ is again an unknot
and for $2\le i<j$, $D_i$ precedes $D_j$.
We can therefore iterate this process, and perform $\eps_i$--surgery along  the unknots $c_i=\partial D_i$ for $i=2,\dots,n$.
As given $i<j$ the disk $D_i$ precedes $D_j$, the consecutive 
surgeries do not affect the remaining disks, in particular at each step the remaining curves are unknots in the 3--sphere.

We denote the resulting knots by $K_2,\dots,K_n$. As above, for each $i=2,\dots,n$
the knot $K_i$ is  obtained from $K_{i-1}$ by doing an $\eps_i$--crossing change.
In particular  $K=K_0$ can be turned into the knot $J:=K_n$ using $n_+$ negative crossing changes and $n_-$ positive crossing changes.
It remains to show that $\Delta_J(t)=1$.

For $i=0,\dots,n-1$ we now denote by $W_i$ the result of adding 2--handles along $c_{i+1}$ to $M(K_{i})\times [0,1]$ with framing $\eps_{i+1}$. 
Adding a 2--handle gives a cobordism between the original manifold and the surgered 3--manifold.
In particular we see that  $\partial W_i=-M(K_{i})\sqcup M(K_{i+1})$.
We can also add all the 2--handles simultaneously along $c_1,\dots,c_n$ with framings $\eps_1,\dots,\eps_n$ and we thus obtain a 4--manifold $W$
which is diffeomorphic to the union $W_1,\dots,W_n$ along the corresponding boundaries.
Note that $\partial W=-M(K)\sqcup M(J)$.
By the discussion of Section \ref{section:attach} the manifold $W$ has furthermore the following properties:
\bn
\item  $H_1(W;\Z)=\Z$,
\item $H_1(M(K);\Z)\to H_1(W;\Z)$ and $H_1(M(J);\Z)\to H_1(W;\Z)$ are isomorphisms,
\item $H_1(W;\zt)\cong H_1(M(K);\zt)/(c_1,\dots,c_n)=0$.
\en
Furthermore, by Proposition \ref{prop:detqw2} we see that
\[ \det(Q_W)\doteq \det\left(\{\lkt(c_i,c_j)\}_{ij}\right)\cdot \Delta^{2}\doteq \det(A(t)^{-1})\cdot \Delta^2\doteq \Delta^{-1}\cdot \Delta^2=\Delta.\]
It now follows from Proposition \ref{prop:deltakj} that the knot $J=K_n$ has trivial Alexander polynomial.
This concludes  the proof of Theorem \ref{mainthm2}, modulo the proof  of Lemma \ref{lem:maintechnicallem} which will be given in the next section.

\section{Proof of Lemma~\ref{lem:maintechnicallem}}\label{proofoflemma}

In this section we shall prove Lemma~\ref{lem:maintechnicallem}. The proof is given in a couple of steps. First, we find pairwise
disjoint nice disks $D_1,\ldots,D_n$, with $c_j=\partial D_j$, such that for any $i,j=1,\ldots,n$ we have
$Bl(c_i,c_j)=\frac{p_{ij}(t)}{\Delta(t)}\in\Q(t)/\zt$. This is an adaptation of Fogel's
argument \cite[p.~287]{Fo94} and is done in Section~\ref{sec:findingdisks}. The property that $Bl(c_j,c_j)=\frac{p_{ij}(t)}{\Delta(t)}\in\Q(t)/\zt$ is weaker
that $\lkt(c_i,c_j)=\frac{p_{ij}(t)}{\Delta(t)}\in\Q(t)$, it only means that $\lkt(c_i,c_j)-\frac{p_{ij}(t)}{\Delta(t)}$ is an element of $\zt$.

To ensure that $\lkt(c_i,c_j)-\frac{p_{ij}(t)}{\Delta(t)}=0$ we need to perform several moves on the disks. We introduce four types of moves in 
Section~\ref{sec:typeRF} and one type in Section~\ref{section:proofii}. These
moves potentially introduce intersections among disks $D_1,\ldots,D_n$, therefore an analysis must be careful and take into account
the ordering of disks. In our prof we perform only the moves that preserve the ordering  of the disks. The details are given in Section~\ref{sec:happyending}.

\subsection{Finding nice based disks}\label{sec:findingdisks}
In this section we prove the following lemma.

\begin{lemma} \label{lem:fogel}
Let $K$ be a knot and let $x_1,\dots,x_n\in H_1(X(K);\zt)$.
Then there exist $n$ disjoint nice based disks $D_1,\dots,D_n$ such that
for $i=1,\dots,n$ the curve  $c_i:=\partial D_i$ represents $x_i$.
\end{lemma}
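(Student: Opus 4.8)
The plan is to realize the classes $x_1,\dots,x_n$ by small based loops and then promote each one to the boundary of a nice disk, using the fact that any class in the Alexander module is represented by a loop whose image in $\pi_1(X(K))$ lies in the second commutator subgroup modulo the obvious constraints. More concretely, first I would pick, for each $i$, a based simple closed curve $\gamma_i$ in $X(K)$ representing $x_i\in H_1(X(K);\zt)$. Since $x_i$ is a torsion element (indeed $\Delta_K(t)\cdot x_i=0$ by Lemma~\ref{annihilates}) and in particular $\gamma_i$ is null-homologous in $X(K)$ — equivalently has zero linking number with $K$ — the curve $\gamma_i$ bounds a Seifert surface in $S^3$ that may, however, intersect $K$. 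The key classical input (this is exactly Fogel's observation, \cite[p.~287]{Fo94}) is that because $\lk(\gamma_i,K)=0$, after a homotopy of $\gamma_i$ in $X(K)$ one can arrange that $\gamma_i$ bounds an embedded disk $D_i$ in $S^3$ meeting $K$ transversely in exactly two points of opposite sign; that is, $\gamma_i$ can be taken to be a \emph{nice} curve. The homotopy does not change the class $x_i$ in $H_1(X(K);\zt)$, so $c_i:=\partial D_i$ still represents $x_i$.

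The second half is to make the $n$ disks pairwise disjoint. Here I would proceed one curve at a time, in order. Having realized $D_1,\dots,D_{k-1}$ as disjoint nice disks with the prescribed boundary classes, I want to push $D_k$ off all of them. The disk $D_k$ may intersect $D_j$ ($j<k$) and it may intersect the earlier boundary curves $c_j$; generic intersections of two embedded disks in $S^3$ are arcs and circles. Circles of intersection can be removed by the usual innermost-disk argument (surgering $D_k$ along an innermost disk cut out on $D_j$), which does not change $\partial D_k$ and preserves niceness provided one chooses the innermost disk to avoid $K$ — possible since $D_j$ already meets $K$ in only its two points and one can isotope to keep the surgery region away from $K$. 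Arcs of intersection with endpoints on $\partial D_k$ or $\partial D_j$ are the substantive case; but since all the $c_j$ are mutually unlinked unknots bounding disjoint disks (inductively), and since changing $D_k$ by such a finger-move / Whitney-type isotopy only alters $D_k$ within $S^3\setminus(K\cup\bigcup_{j<k}D_j)$ up to sliding, one can remove them at the cost of possibly changing $c_k$ within its homology class — which is harmless. The cleanest packaging is: realize all $n$ classes by nice curves first, then observe that nice disks for \emph{distinct} small loops can be taken disjoint because one can shrink each $D_i$ into a tiny ball around a point of $c_i$ except for a thin band running along $K$, and these bands and tubes for different $i$ can be made disjoint by general position in the $3$-sphere.

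The step I expect to be the main obstacle is the first one: producing even a single nice disk bounded by a curve representing a prescribed class $x\in H_1(X(K);\zt)$. Realizing $x$ by an embedded curve with $\lk(\cdot,K)=0$ is immediate, but upgrading to a curve bounding an embedded disk hitting $K$ exactly twice with opposite signs requires the geometric argument of Fogel: one takes a Seifert surface $S$ for the curve in $S^3$, notes the algebraic intersection $S\cdot K=0$, and then uses that $K$ is connected together with tubing and finger moves to cancel excess intersection points of $S$ with $K$ in pairs, eventually shrinking $S$ down to a disk with exactly one canceling pair of intersections left (one cannot in general reach zero because the curve is not null-homotopic in $X(K)$, only null-homologous). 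Once this is in hand for a single curve, the disjointness of the family is routine general position in $S^3$, so I would spend the bulk of Section~\ref{sec:findingdisks} on the single-disk construction and treat the disjointness as a short concluding paragraph.
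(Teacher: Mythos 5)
Your plan diverges from the paper's at exactly the point you identify as the main obstacle, and the gap there is real. The paper's construction (which is Fogel's actual argument, reproduced on p.~287 of \cite{Fo94}) does \emph{not} start from a Seifert surface for a curve representing $x_i$ and try to kill excess intersections with $K$. Instead it exploits the algebraic fact that multiplication by $(t-1)$ is an automorphism of $H_1(X(K);\zt)$: one writes $x_i=(t-1)y_i$, represents the $y_i$ by disjoint based curves $d_i$, picks disjoint meridional disks $S_i$ with $m_i=\partial S_i$ hitting only $d_i$ and only once, and then takes $c_i$ to be the commutator $m_id_im_i^{-1}d_i^{-1}$. That commutator visibly represents $ty_i-y_i=x_i$ and visibly bounds a nice disk living in a small neighborhood of $S_i\cup d_i$; the disks for different $i$ are disjoint by construction. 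So the ``promotion to a nice disk'' is not a separate geometric cleanup step but comes for free from choosing the right representative in the Alexander module.

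Your proposed cleanup via ``tubing and finger moves'' does not obviously work and is not Fogel's observation. Tubing a surface along arcs of $K$ to cancel a $\pm$ pair of intersections with $K$ increases the genus, so if you start from a Seifert surface you will never arrive back at a disk; and a finger move in a $3$-manifold that pushes a subdisk of $D$ along an arc of $K$ is obstructed in general by the rest of $D$ and by $K$ itself (this is not the ambient dimension where the Whitney trick is available), so you cannot assume it produces a new \emph{embedded} disk. Your innermost-circle argument for disjointness is fine as far as it goes (it preserves $\partial D_k$ and its lift to the infinite cyclic cover), but the arc-of-intersection case, which you concede is the substantive one, is left to a remark that you may ``change $c_k$ within its homology class,'' and here one has to be careful: what must be preserved is the class in $H_1(X(K);\zt)$, not merely in $H_1(X(K);\Z)\cong\Z$, and arbitrary isotopies of $c_k$ can change the former. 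The $(t-1)$ construction sidesteps all of this by producing the disjoint nice disks directly.
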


This lemma is a slight generalization of a result by Fogel
 \cite[p.~287]{Fo94}. The proof we give is also basically due to Fogel.

\begin{proof}
Let $x_1,\dots,x_n\in H_1(X(K);\zt)$. The multiplication by $t-1$ is an isomorphism of $H_1(X(K);\zt)$ (see e.g. \cite{Lev77}). We can therefore find $y_1,\dots,y_n\in H_1(X(K);\zt)$
such that $(t-1)y_i=x_i, i=1,\dots,n$. We now represent $y_1,\dots,y_n$ by disjoint based curves $d_1,\dots,d_n$.
(By doing crossing changes on the curves $d_1,\ldots,d_n$, we can without loss of generality assume that the unbased curves are unknotted in $S^3$, this justifies the illustration below, but is not necessary for the argument.)
We also pick disjoint embedded oriented disks $S_1,\dots,S_n$ with the following properties:
\bn
\item for $i=1,\dots,n$ the disk $S_i$ intersects $K$ precisely once with positive intersection number,
\item for $i=1,\dots,n$ the curve $m_i:=\partial S_i$ intersects $d_i$ in precisely one point,
\item   for $i\ne j$ the curves $m_i$ and $d_j$ are disjoint.
\en
We refer to Figure \ref{fig:nicedisks} for a schematic picture.
\begin{figure}[h]
\begin{tabular}{cc}
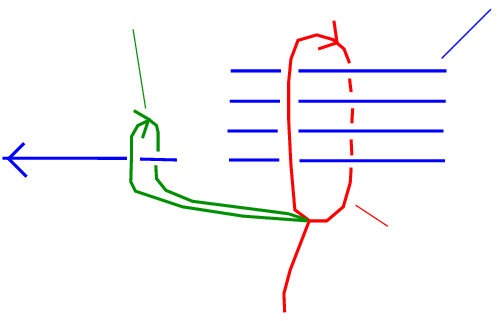& 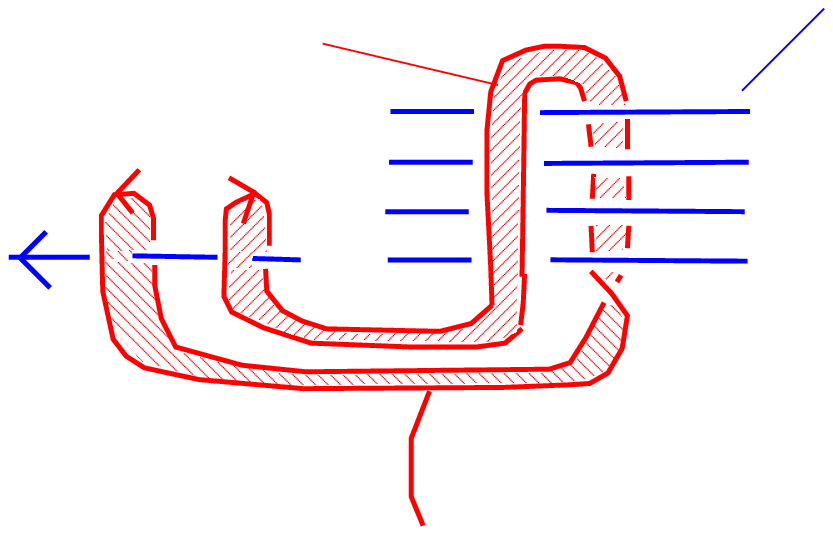
\end{tabular}
\caption{Construction of nice disks.}
\label{fig:nicedisks}
\end{figure}
Now note that for each $i$ the unbased curve $m_id_im_i^{-1}d_i^{-1}$ bounds a nice disk $D_i$ which can be placed in a small  neighborhood around the disk $S_i$ and the unbased curve $d_i$. (We again refer to Figure \ref{fig:nicedisks} for a schematic picture.) By construction the  disks
$D_1,\dots,D_n$ are disjoint. On the other hand, since $m_i$ is a meridian we see that
$m_id_im_i^{-1}d_i^{-1}=(m_id_im_i^{-1})\cdot d_i^{-1}$ represents $ty_i-y_i=(t-1)y_i=x_i$ in the Alexander module.
If we equip $D_1,\dots,D_n$ with the basings of the based curves $d_1,\dots,d_n$ we thus obtain the required based disks.
\end{proof}

\subsection{Properly arranged disks}

The following discussion will be essential in the remainder of the proof.

\begin{definition}\label{properly}
Let $K\subset S^3$ be a knot and let $D_1,\dots,D_n$ be  nice based disks.
We say that they are \emph{properly arranged} if the following conditions hold:
\bn
\item the segment $S:=[0,1]\times 0\times 0\subset \R^3\subset S^3$ is part of the knot $K$,
and the orientation of $K$ agrees with the canonical orientation on that segment,
\item all intersection points of the disks with the knot $K$ lie on $S$,
\item for $i<j$ the disk $D_i$ precedes the disk $D_j$.
\en
\end{definition}
\begin{remark}
If $D_1,\ldots,D_n$ are nice based disks that are disjoint, then it is 
straightforward to see that  a segment $S\subset K$ exists which satisfies Conditions  (1) and (2) from Definiton~\ref{properly}.
\end{remark}

 Note that if the disks $D_1,\dots,D_n$ are properly arranged then  we can find a tubular neighborhood of the segment $S$ of the knot $K$ which is isotopic to the  picture shown in Figure \ref{fig:properlyarranged}.
We call such a neighborhood of $S$ a \emph{standard segment}. We refer to each of the $2n$ components of the disks
as a \emph{piece}. The orientation on the disks endows each piece with an orientation, which we refer to as positive or negative
depending on the intersection with the oriented $S$. Finally each cube  in $S$ which contains precisely two pieces
is called a \emph{subsegment}.
In the following we will furthermore use the expressions `adjacent pieces' and `piece to the left' and `piece to the right' with the obvious meanings.



We henceforth equip the set of points $S$ with the canonical ordering coming from the ordering on the interval $[0,1]$.
If the  disjoint nice disks $D_1,\dots,D_n$ are properly arranged then the intersection points are of the form
$z_1< z_2< \dots <z_{2n}$. Given $i\in \{1,\dots,2n\}$ we denote by $\s(i)\in \{1,\dots,n\}$ the integer which has the property that
the disk $D_{\s(i)}$ intersects $S$ in the point $z_i$.
We refer to the ordered set
\[ \{\s(1),\dots,\s(2n)\} \]
as the \emph{arrangement} of the properly arranged disks $D_1,\dots,D_n$.
\begin{figure}[h]
\begin{center}
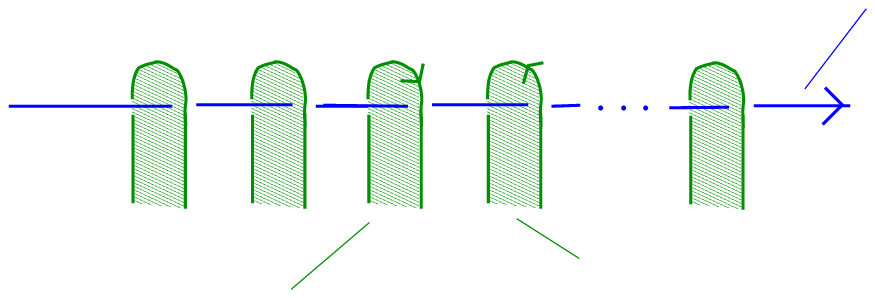
\caption{Properly arranged disks.}\label{fig:properlyarranged}
\end{center}
\end{figure}
We refer to Figure \ref{fig:properlyarranged} for an illustration.

\subsection{Type $R$ and $F$ moves}\label{sec:typeRF}

Given properly arranged nice disks $D_1,\dots,D_n$ we consider the following  local moves
which produce new sets of properly arranged nice disks $D_1',\dots,D_n'$.
The subsequent figures show moves on sets of properly arranged disks which take place in subsegments, in particular  no other disks and no 
basings are allowed in these subsets of $S^3$.

\smallskip
\noindent
If $j<i$, then a \textbf{type $R_1$ move} consists of the change as drawn on Figure~\ref{fig:swap},
i.e. we push the disk $D_j$ `on the right' over the disk $D_i$ `on the left'.
\begin{figure}
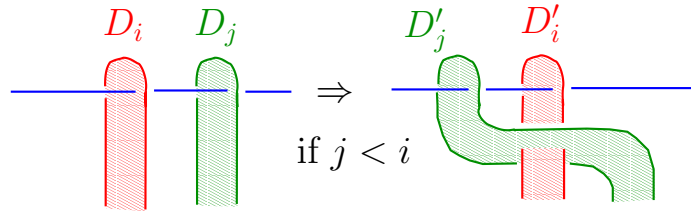
\caption{A type $R_1$ move.}\label{fig:swap}
\end{figure}
Note that the isotopy types of the boundary curves are unchanged, so the twisted linking numbers of the new boundary curves
agree with the twisted linking numbers of the old boundary curves.
The resulting disk $D_j'$ precedes $D_i'$,  because $D_j$ precedes $D_i$.

\smallskip
\noindent If $j>i$, then a \textbf{type $R_2$ move} consists of the move  shown in Figure \ref{fig:swap2},
\begin{figure}
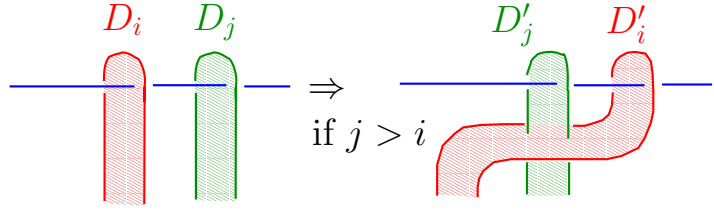
\caption{A type $R_2$ move.}\label{fig:swap2}
\end{figure}
which is almost of the same form as the type $R_1$ move,
except that we now push the disk $D_i$ `on the left' over the disk $D_j$ `on the right'.
Note that  $D_1',\dots,D_n'$ are again properly arranged.

\smallskip
\noindent A \textbf{type $F_1$ move} consists of applying the move shown in Figure \ref{fig:typef} to two adjacent pieces with opposite orientations.
\begin{figure}
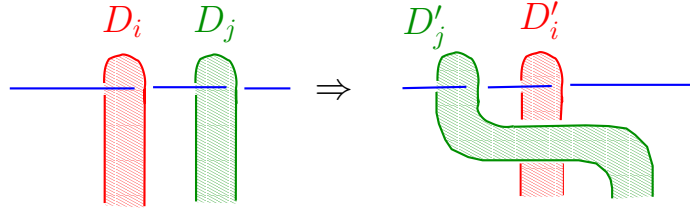
\caption{Type $F_1$ move.}\label{fig:typef}
\end{figure}

\smallskip
\noindent A \textbf{type $F_2$ move} consists of applying the move shown in Figure \ref{fig:typefii} to two adjacent pieces with opposite orientations.
\begin{figure}
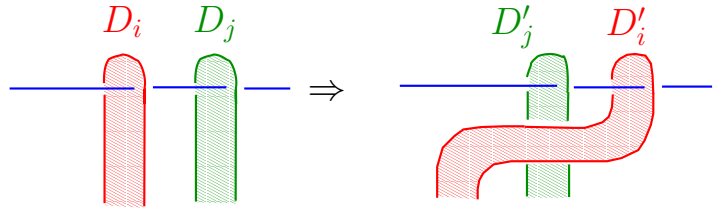
\caption{Type $F_2$ move.}\label{fig:typefii}
\end{figure}
\begin{remark}
One could define $F$ moves for two adjacent pieces with the same orientation, but we will not need that.
\end{remark}

We denote  by $D_1',\dots,D_n'$ the disks resulting from applying a type $F_1$ move or a type $F_2$ move
to disks $D_1,\dots,D_n$. We write $c_l:=\partial D_l$ and $c_l':=\partial D_l'$ for $l=1,\dots,n$.
Note that neither move  creates any new intersections between the disks.
 In particular $D_1',\dots,D_n'$ are again properly arranged.
 On the other hand the isotopy type of the boundary curves changes. To state how the twisted linking numbers change we consider
 the curve $d$ which is given by concatenation of the following paths:
 \bn
 \item a path  from the base point $*$ along the based curve $c_j=\partial D_j$ to a point $P_j$ on the  piece of $D_j$ involved in the type $F$ moves,
 \item a horizontal path to the corresponding point $P_i$ on $c_i=\partial D_i$,
 \item a path from the point $P_i$ on $c_i$ to the base point $*$ along the based curve $c_i$.
 \en
 We refer to Figure \ref{fig:curved}  for an illustration.
 \begin{figure}
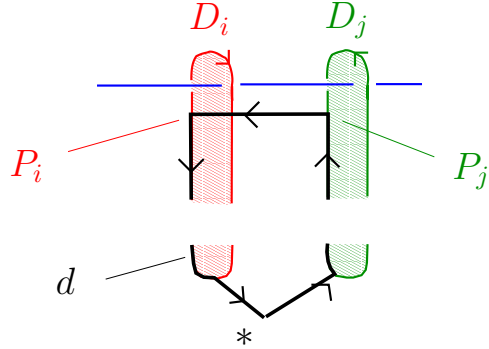
\caption{Curve $d$ for different orientations of the pieces.}\label{fig:curved}
\end{figure}
We then denote  by
\begin{equation}\label{eq:k}
k:=k(D_i,D_j)
\end{equation}
the image of $d$ under the epimorphism $\pi_1(X_K)\to \Z$ given by sending the oriented meridian of $K$ to $1$. It is straightforward to see that $k$ is independent of the choice of $P_j$ made.

\begin{lemma}\label{lem:cchange}
For any $r,s$ with $\{r,s\}\ne \{i,j\}$ we have
\[  \lkt(c_r',c_s')=\lkt(c_r,c_s), \]
furthermore
\bn
\item if $i\ne j$, then
\begin{equation}\label{equ:lcij}
  \lkt(c_i',c_j')=\lkt(c_i,c_j)+\eps t^k(t^\eta-1) \mbox{ and } \lkt(c_j',c_i')=\lkt(c_j,c_i)+\eps t^{-k}(t^{-\eta}-1),
\end{equation}
\item if $i=j$,  then
\begin{equation}\label{equ:lcii}
\lkt(c_i',c_i')=\lkt(c_i,c_j)+ \eps t^k(t^\eta-1)+\eps t^{-k}(t^{-\eta}-1),
\end{equation}
\en
where $\eps=-1$ if we apply a type $F_1$ move and $\eps=1$ if we apply a type $F_2$ move,
furthermore $\eta=-1$ if the piece on the left  has positive orientation and $\eta=1$ if the piece on the left  has negative orientation.
\end{lemma}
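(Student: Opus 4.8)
The strategy is to localise everything in the ball $B$ equal to the subsegment in which the move is performed. By construction $B$ contains exactly one piece of $D_i$ and one piece of $D_j$ (when $i=j$, two pieces of the same disk), no other disk $D_r$, and no basing; outside $B$ nothing changes, and inside $B$ the move replaces $c_j$ (and in the case $i=j$ also $c_i$) by a band sum with a small null--homologous ``finger'' loop $\gamma\subset B$ --- null--homologous so that the represented class $x_j$ is unchanged, while $c_i$ is left untouched when $i\ne j$ (this is where the fact that the move creates no new disk--disk intersections is used). The one general fact I use repeatedly is that the two longitudinal sides of a band are parallel and oppositely oriented, so a band sum of a curve $c$ with a loop $\gamma$ changes the equivariant intersection number $F\cdot c$, for any surface $F$, by exactly $F\cdot\gamma$.

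\emph{Unaffected pairs.} Let $\{r,s\}\ne\{i,j\}$. At least one index, say $r$, is distinct from both $i$ and $j$, so $c_r'=c_r$ and $c_r$ is disjoint from $B$. If also $c_s'=c_s$ there is nothing to prove. Otherwise $c_s'$ is a band sum of $c_s$ with a finger loop $\gamma\subset B$, whence $\lkt(c_r',c_s')=\lkt(c_r,c_s)+\lkt(c_r,\gamma)$; and $\lkt(c_r,\gamma)=0=\lkt(\gamma,c_r)$ because $\gamma$ lies in $B$ and $c_r$ in its complement, so the surfaces cobounding $\Delta\gamma$, resp.\ $\Delta c_r$, can be chosen disjoint from $\pi^{-1}(\overline B)$, resp.\ from $\pi^{-1}(B)$.

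\emph{The affected pair.} For $i\ne j$ it suffices to compute $\lkt(c_i',c_j')$: the identity $\lkt(c_j',c_i')=\overline{\lkt(c_i',c_j')}$ together with $\overline{\eps t^k(t^\eta-1)}=\eps t^{-k}(t^{-\eta}-1)$ then gives the second formula in \eqref{equ:lcij}, and the case $i=j$ follows by applying the computation to each of the two fingers, yielding \eqref{equ:lcii}. Since $c_i'=c_i$, a surface $F$ with $\partial F=\Delta c_i$ also has $\partial F=\Delta c_i'$, and the band remark gives $\lkt(c_i',c_j')=\tfrac1\Delta F\cdot c_j'=\lkt(c_i,c_j)+\tfrac1\Delta F\cdot\gamma$, where $\gamma\subset B$ is the finger loop of $c_j$. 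So the lemma comes down to the local identity $\tfrac1\Delta F\cdot\gamma=\eps t^k(t^\eta-1)$. The loop $\gamma$ encircles the arc $c_i\cap B$ at two nearby places that are separated by the unique transverse intersection of the piece of $D_i$ with $K$; because $\tfrac1\Delta F$ is a rational Seifert surface for $c_i$, a meridian of $c_i$ meets it in a unit of $\zt$, so the two encirclements contribute $\pm\Delta(t)\,t^k$ and $\mp\Delta(t)\,t^{k+\eta}$ to $F\cdot\gamma$. Here the exponent shift $t^\eta$ is exactly the jump in $\phi$--value produced by crossing $K$ at the piece of $D_i$ (with orientation encoded by $\eta$), and $k=k(D_i,D_j)$ of \eqref{eq:k} is the $\phi$--discrepancy between the basing of $c_i$ and the place reached by $\gamma$; dividing by $\Delta$ and checking signs gives $\eps t^k(t^\eta-1)$.

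The conceptual content is therefore merely locality together with the observation that a finger threaded once around a nice piece of $D_i$ (which meets $K$ once) shifts the relevant equivariant intersection number by the factor $t^\eta-1$. The main obstacle is the remaining bookkeeping in the last identity: fixing orientation and basepath conventions so that the sign comes out as $\eps$ (namely $-1$ for an $F_1$ move and $+1$ for an $F_2$ move, the two differing by an over/under crossing), so that the exponent is exactly $k$ rather than $k\pm1$, and so that $\eta$ is as prescribed. This is an unavoidable direct inspection of Figures~\ref{fig:typef}, \ref{fig:typefii} and \ref{fig:curved}, but it involves no idea beyond the above.
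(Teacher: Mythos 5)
Your proof is correct in outline and follows essentially the same route as the paper's, only packaged a little differently. The paper picks a surface $F$ with $\partial F=\Delta c_j$, constructs $F'$ with $\partial F'=\Delta c_j'$ by cutting out a rectangle near the modification, and then tallies the two intersection points of $F$ with $c_i$ that disappear when passing to $F'$; those two points contribute $-\eps_P\phi(l_P)\Delta-\eps_Q\phi(l_Q)\Delta$, and $\phi(l_P)$ and $\phi(l_Q)$ differ by a single power of $t$ because one of the curves $l_P,l_Q$ passes once more through the piece of $D_i$ that meets $K$. You instead keep $F$ the rational Seifert surface for $c_i$ fixed (using $c_i'=c_i$, which is exactly the paper's use of the preliminary $R_1$ isotopy), view the $F$-move as a band-sum $c_j'=c_j\#\gamma$, and compute $F\cdot\gamma$ directly. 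These are dual descriptions of the same geometry: "cut a rectangle from the surface bounding $\Delta c_j$" and "attach the rectangle's complementary boundary to $c_j$ as a finger $\gamma$" are the same modification, and your single local computation $\tfrac1\Delta F\cdot\gamma=\pm t^k\mp t^{k+\eta}$ is exactly the paper's two intersection points $P,Q$. Both you and the paper then obtain the second identity in \eqref{equ:lcij} by hermitian antisymmetry, and both treat $i=j$ by noticing that the surface and the curve now change simultaneously, producing the conjugate pair of terms in \eqref{equ:lcii}. Your locality argument for the unaffected pairs via a small disk $D_\gamma\subset B$ bounding $\gamma$ is the natural replacement for the paper's remark that the $F$-moves introduce no new disk--disk intersections. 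Where you genuinely defer — fixing the signs $\pm$, pinning $k$ exactly rather than $k\pm 1$, and confirming $\eta$ against the figures — is precisely the point at which the paper also resorts to direct inspection ("it follows easily from the definitions (see also Figure~\ref{fig:lpq})", "we leave the details to the reader"), so this is not a gap relative to the published argument. One small thing worth making explicit in a write-up: the assertion $c_i'=c_i$ for $i\ne j$ should be tied to the isotopy ($R_1$ move) as the paper does, since the raw pictures of the $F$-moves redraw both $D_i$ and $D_j$.
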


We will first consider the case  of a type $F_1$ move such that the piece on the left has positive  orientation.\\

\smallskip
\noindent\textbf{Case 1. $i\neq j$.} 
It is clear, that for $\{r,s\}\ne \{i,j\}$ we have
$ \lkt(c_k',c_l')=\lkt(c_k,c_l)$.
We will now show that
\[ \lkt(c_j',c_i')=\lkt(c_j,c_i)+ t^{k}(t-1).\]
The claim regarding $\lkt(c_i',c_j')$ then follows from the antisymmetry of the twisted linking number.

First recall that the twisted linking numbers only depend on isotopy invariants of the curves.
We can therefore ignore the disks and we can also first apply a type $R_1$ move, which is an isotopy.
We therefore have to compare the twisted linking numbers of  the  two sets of curves shown in Figure \ref{fig:typerf}.
\begin{figure}[h]
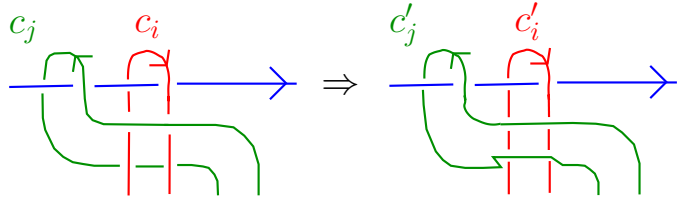
 \caption{Composition of the inverse of a type $R_1$ move and a type $F$ move.} \label{fig:typerf}
\end{figure}

We  pick a based immersed surface $F$ such that $\partial F=\Delta_K(t)\cdot c_j$.
In the subsegment  we can and will assume that the surface
$F$ is orthogonal to the plane  which contains the diagram and that it points `upwards'.
We now obtain a surface $F'$ with $\partial F'=\Delta_K(t)\cdot c_j'$ by  cutting out a small rectangle of $F$ around the modification.
The surfaces $F$ and $F'$ in the neighborhood of the modification are sketched in Figure \ref{fig:f}.
\begin{figure}[h]
\begin{tabular}{cc}
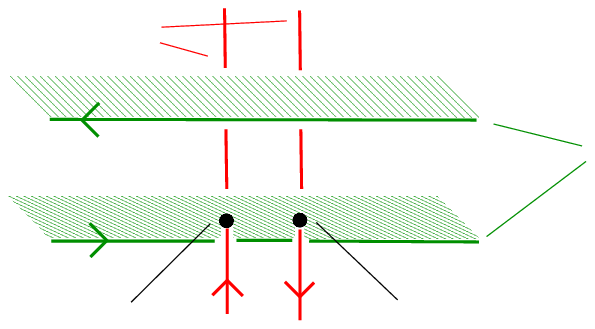&\hspace{1cm}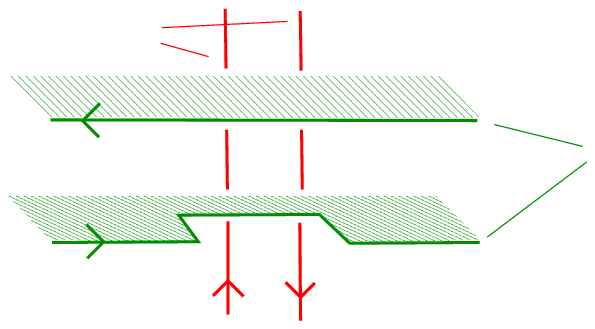
\end{tabular}
\caption{One sheet of $F$ respectively $F'$ in the subsegment
glued to $c_i$ as in Figure \ref{fig:typerf}. The surfaces go `vertically out of the plane' in the direction of the reader. On the left, the lower vertical sheet thus intersects $c_i$ in two points $P$ and $Q$. 
On the right, we pushed the surface across $c_i$, and thus removed the intersection points.}
\label{fig:f}
\end{figure}
Note that in Figure \ref{fig:f} we only show one  sheet of the surfaces $F$ and $F'$, in reality each sheet which is drawn should be considered $\Delta_K(t)$--times.

We are now interested in the difference between $F\cdot c_i$ and $F'\cdot c_i'$.
In the subsequent discussion we will continue with the notation in the definition of $F\cdot c_i$ (see Section \ref{section:basedcs}).
We consider the intersection points $P$ and $Q$ of $F$ and $c_i$ as shown in Figure \ref{fig:f} on the left.
It is clear that $\eps_P=-1$ and $\eps_Q=+1$.
It furthermore follows easily from the definitions (see also Figure \ref{fig:lpq})
\begin{figure}[h]
\begin{tabular}{cc}
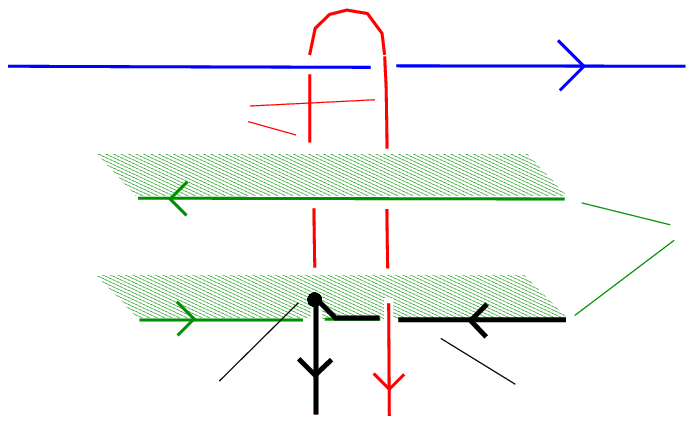&\hspace{1cm}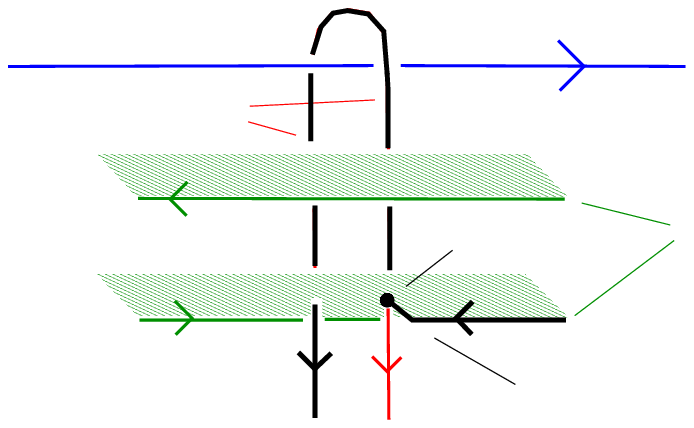
\end{tabular}
\caption{The curves $l_P$ and $l_Q$ in the definition of $F\cdot c$.
Here the sheets are again pointing outwards toward the reader. The upper sheet lies above $c_i$ and thus has no intersections with $c_i$, whereas the lower sheet intersects $c_i$ in two points $P$ and $Q$.}
\label{fig:lpq}
\end{figure}
 that
\[ \phi(l_P)=t^{k} \mbox{ and } \phi(l_Q)=t^{k-1}.\]
(The point is that in the definition of $\phi(l_P)$ the curve $l_P$ wraps around the knot once more in the negative direction.)
Now recall that $F$ and $F'$ consist of $\Delta_K(t)$ copies of the sheets indicated in the diagrams. It now follows that
\[ \ba{rcl} \lkt(c_j',c_i')&=& F'\cdot c_i'\\
&=& F\cdot c_i-\Delta_K(t)\cdot \eps_{P}\,\phi(l_P)-\Delta_K(t)\cdot \eps_{Q}\,\phi(l_Q)\\
&=&F\cdot c_i-\Delta_K(t)(t^{k-1}-t^{k})\\
&=&\lkt(c_j,c_i)-\Delta_K(t)t^k(t^{-1}-1).\ea \]
This concludes the proof in the case that $i\ne j$.\\

\smallskip
\noindent\textbf{Case 2. $i=j$.} We again pick a based immersed surface $F$ such that $\partial F=\Delta_K(t)\cdot c_i$.
In a neighborhood of the modification we can and will assume that the surface
$F$ is orthogonal to the plane  which contains the diagram and that it points `upwards'.
We again obtain a surface $F'$ with $\partial F'=\Delta_K(t)\cdot c_j'$ by  cutting out a small rectangle of $F$ around the modification.
The surfaces $F$ and $F'$ in the neighborhood of $c_i$ and the modification are sketched in Figure \ref{fig:f}.
\begin{figure}[h]
\begin{tabular}{cc}
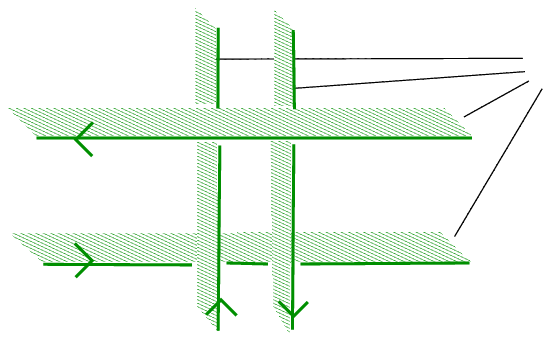&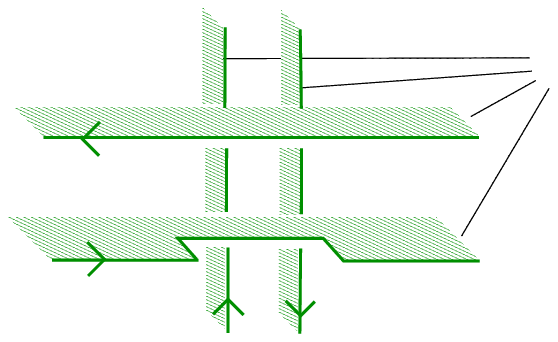
\end{tabular}
\caption{One sheet of $F$ respectively $F'$ in the subsegment.
The surfaces $F$ and $F'$ point vertically outwards towards the reader. They are indicated only in a small neighborhood of the curves and they have to  be extended in the direction of the reader beyond what is shown. In particular on the left the lower horizontal vertical sheet of $F$  intersects the two vertical sheets of $F$. On the other hand, on the right the two vertical sheets of $F'$  intersect the  lower horizontal sheet of $F'$.}
\label{fig:ffprim}
\end{figure}
Note that $F\cap c_i$ contains two intersection points, $P$ and $Q$, which do not appear in $F'\cap c_i'$, in turn $F'\cap c_i'$ contains two new intersection points, namely $P'$ and $Q'$.
We refer to Figure \ref{fig:i} for an illustration.
\begin{figure}[h]
\begin{tabular}{cc}
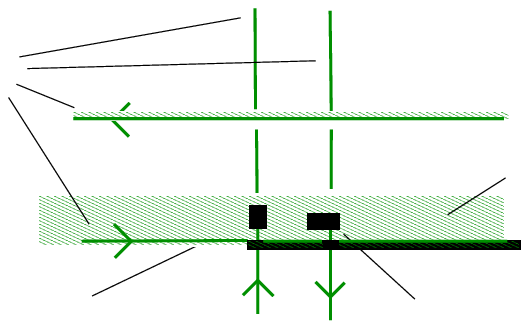&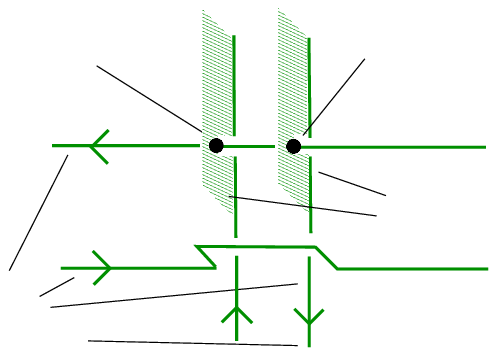
\end{tabular}
\caption{Extra intersection points of $F$ and $c_i$ respectively of $F'$ and $c'_i$. Here we show only parts of the surfaces $F$ and $F'$, the two parts are again meant to point outwards towards the reader.}
\label{fig:i}
\end{figure}
Note that in Figure \ref{fig:i} we now only indicate the parts of the sheets of $F$ and $F'$ which contain the extra intersection points.
A careful consideration of the intersection points now shows that
\[\lkt(c_i',c_i')=\lkt(c_i,c_i)- t^k(t^{-1}-1)-t^{-k}(t-1).\]
We leave the details to the reader.

This concludes the proof of Lemma \ref{lem:cchange} in
 the case  of a type $F_1$ move such that the piece on the left has positive  orientation.
 It is straightforward to verify that the other cases of Lemma \ref{lem:cchange} can be proved completely analogously.
 We again leave the details to the reader.

\smallskip
\subsection{The type $T(n)$ move}\label{section:proofii}

\noindent A \emph{type $T(n)$ move} consists of applying  the move shown in Figure \ref{fig:typet} to the based disk $D_i$.
\begin{figure}
\begin{tabular}{cc}
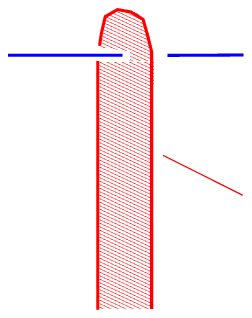&\hspace{0.3cm}%
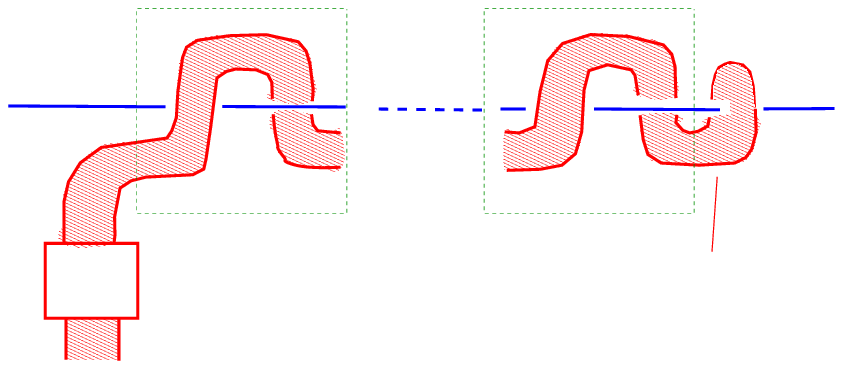
 \end{tabular}
\caption{Type $T(n)$ move.}\label{fig:typet}
\end{figure}
This move is in fact an isotopy of the disk $D_i$ as will be shown later in Lemma \ref{lem:isotopy}.
In particular this move leaves all twisted linking numbers unchanged.
The move is important because it allows us to modify the term $k(D_i,D_j)$ which appears in the $F$--moves, see \eqref{eq:k}.
More precisely, suppose  we have two adjacent pieces of $D_i$ and $D_j$, with the piece corresponding to $D_i$ to the left.
Let $k\in \Z$ be the integer which is defined as in the discussion of the type $F$ moves.
If we first apply a type $T(n)$ move to $D_i$, then
\be \label{equ:kij} k(D_i',D_j)=k(D_i,D_j)+n.\ee

We will prove  the following lemma which shows that the type $T(n)$ move does not change the
isotopy type of the disk involved.

\begin{lemma}\label{lem:isotopy}
The two disks in Figure \ref{fig:isotopy} are isotopic relative to the boundary of the cube which contains the figures.
\begin{figure}[h]
\begin{tabular}{cc}
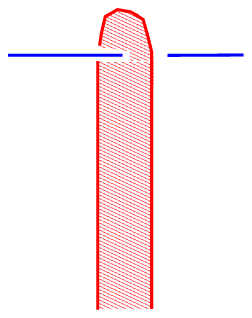&
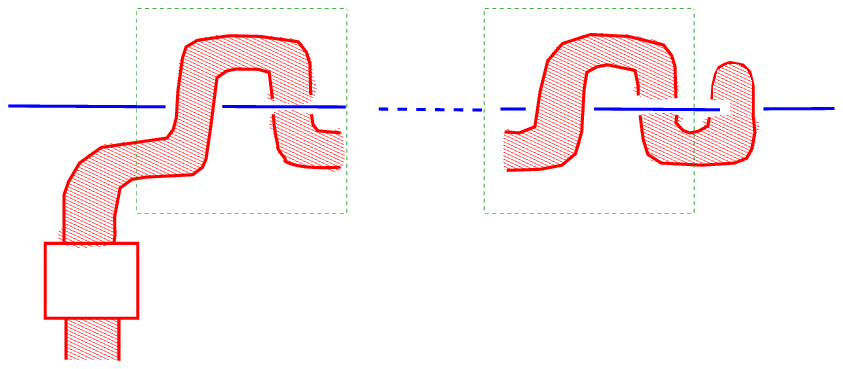
 \end{tabular}
 \caption{Isotopic disks in the statement  of Lemma \ref{lem:isotopy}.} \label{fig:isotopy}
\end{figure}
\end{lemma}

\begin{proof}
We first consider the set of isotopies (relative to the boundary of the cube) in Figures \ref{fig:iso1} and \ref{fig:iso2}.
\begin{figure}[h]
\begin{tabular}{cc}
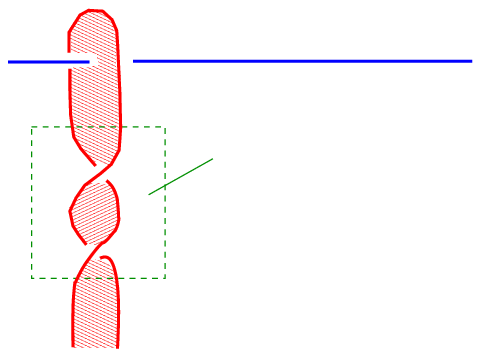&
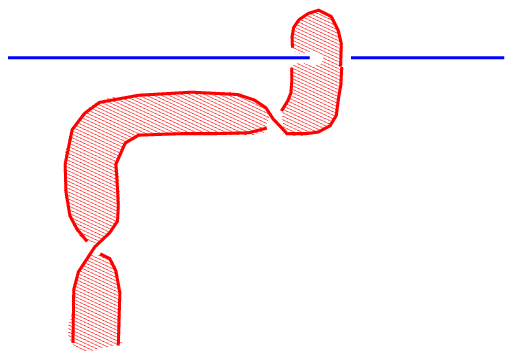
 \end{tabular}
 \caption{First isotopy in the proof of Lemma \ref{lem:isotopy}.} \label{fig:iso1}
\end{figure}
\begin{figure}[h]
\begin{tabular}{ccc}
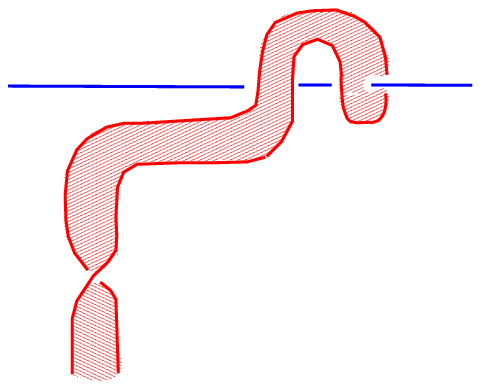&
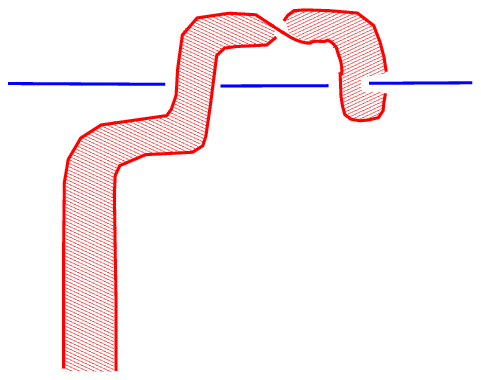&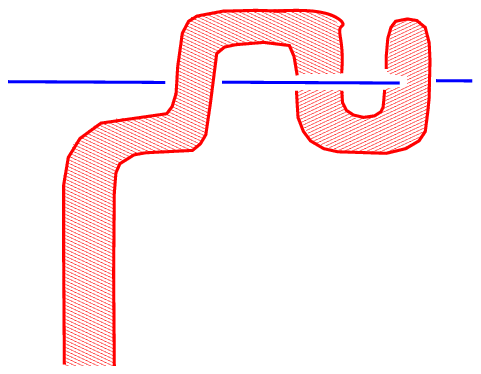
\end{tabular}
 \caption{Second set of isotopies in the proof of Lemma \ref{lem:isotopy}.} \label{fig:iso2}
\end{figure}
We then iterate  this process $k$ times. The lemma now follows from first adding a canceling pair of a full $k$ twist and a full $-k$ twist to the disk on the left hand side of Figure \ref{fig:isotopy}.
\end{proof}

\subsection{Proof of Lemma~\ref{lem:maintechnicallem}}\label{sec:happyending}

We are now in a position to prove Lemma \ref{lem:maintechnicallem}.

\begin{proof}[Proof of Lemma \ref{lem:maintechnicallem}]
Let $K$ be a knot and let $x_1,\dots,x_n$ be elements in $H_1(X(K);\zt)$.
Let $p_{ij}(t)\in \zt$, $i,j\in \{1,\dots,n\}$ be such that
\[ Bl(x_i,x_j)= \frac{p_{ij}(t)}{\Delta_K(t)} \in \Q(t)/\zt \mbox{ and } p_{ij}(t)=p_{ji}(t^{-1})\]
for any $i$ and $j$. We will prove the following claim.

\begin{claim}
Let $l\in \{1,\dots,n\}$.  Then   there exist based nice  disks  $D_1,\dots,D_n$ with the following properties:
\begin{itemize}
\item[\textbf{(1)}] the disks $D_1,\dots,D_n$ are properly arranged,
\item[\textbf{(2)}] for any $i$ the based curve $c_i:=\partial D_i$ represents $x_i$,
\item[\textbf{(3)}] the disks $D_{l+1},\dots,D_n$ are disjoint,
\item[\textbf{(4)}] the first $2l$ entries of the arrangement of $D_1,\dots,D_n$ are
\[ \{1,1,2,2,\dots,l,l\},\]
\item[\textbf{(5)}] if for $i=1,\dots,n$ we equip $c_i=\partial D_i$ with the framing $p_{ii}(1)$, then
\[ \lkt(c_i,c_j)=\frac{p_{ij}(t)}{\Delta_K(t)}\in \Q(t),\]
for any $i \in\{1,\dots,l\}$ and $j\in \{1,\dots,n\}$.
\end{itemize}
\end{claim}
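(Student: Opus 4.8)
The plan is to prove the Claim by induction on $l$, where the inductive step upgrades a family of disks satisfying \textbf{(1)}--\textbf{(5)} for $l-1$ to one satisfying them for $l$. When $l=1$ the hypothesis ``for $l-1=0$'' merely asserts that $D_1,\dots,D_n$ are pairwise disjoint, properly arranged nice based disks with $\partial D_i$ representing $x_i$, which is exactly what Lemma~\ref{lem:fogel} together with the remark after Definition~\ref{properly} provides; so it suffices to describe the inductive step.

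\emph{Step 1: reposition $D_l$.} Assume $D_1,\dots,D_n$ satisfy \textbf{(1)}--\textbf{(5)} for $l-1$. Since $D_l,\dots,D_n$ are disjoint, I slide the two pieces of $D_l$ leftward by a sequence of type $R_1$ moves, each pushing $D_l$, which lies to the right, over one of $D_{l+1},\dots,D_n$, which lies to the left, until these two pieces become the $(2l-1)$st and $2l$th pieces along the standard segment, directly to the right of the pieces of $D_1,\dots,D_{l-1}$. Each such move is an ambient isotopy, so no twisted linking number changes; it only creates intersections of $D_l$ with some later disk, so $D_{l+1},\dots,D_n$ stay pairwise disjoint; and the first $2(l-1)$ pieces are untouched. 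Hence after Step 1 conditions \textbf{(1)}, \textbf{(3)}, \textbf{(4)} hold, \textbf{(2)} is unchanged, and \textbf{(5)} holds for every index $i\le l-1$.

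\emph{Step 2: correct $\lkt(c_l,c_j)$.} It remains to arrange $\lkt(c_l,c_j)=p_{lj}(t)/\Delta_K(t)$ for all $j$, without disturbing the above. For $j<l$ there is nothing to do: \textbf{(5)} for $l-1$ gives $\lkt(c_j,c_l)=p_{jl}(t)/\Delta_K(t)$, whence $\lkt(c_l,c_j)=\ol{\lkt(c_j,c_l)}=p_{jl}(t^{-1})/\Delta_K(t)=p_{lj}(t)/\Delta_K(t)$. For $j=l$, equip $c_l$ with the framing $p_{ll}(1)$; since $\lkt(c_l,c_l)$ and $p_{ll}(t)/\Delta_K(t)$ both represent $Bl(x_l,x_l)\in\Q(t)/\zt$ and take the value $p_{ll}(1)$ at $t=1$, their difference is a symmetric element of $(t-1)\zt$, hence a $\Z$-combination of Laurent polynomials $\pm\bigl(t^k(t^\eta-1)+t^{-k}(t^{-\eta}-1)\bigr)$, and each such summand is produced by a type $F_1$ or $F_2$ move applied to the two adjacent, oppositely oriented pieces of $D_l$, after first adjusting the exponent $k$ by a type $T(n)$ move on $D_l$; see \eqref{equ:lcii} and \eqref{equ:kij}. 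For $j>l$ the difference $\lkt(c_l,c_j)-p_{lj}(t)/\Delta_K(t)$ lies in $\zt$ (both terms represent $Bl(x_l,x_j)$) and in fact in $(t-1)\zt$, hence is a $\Z$-combination of polynomials $\pm t^k(t^\eta-1)$; I realize each summand by sliding, through $R$ moves, a suitably oriented piece of $D_j$ into the slot just to the right of $D_l$, adjusting $k(D_l,D_j)$ by a type $T(n)$ move on $D_l$, applying the appropriate type $F_1$ or $F_2$ move (see \eqref{equ:lcij}), and then sliding the piece of $D_j$ back where it was. By the first assertion of Lemma~\ref{lem:cchange} each of these $F$ moves alters only the pair $\{c_l,c_j\}$, so the linking numbers fixed earlier are untouched; and because the auxiliary $R$ moves are isotopies performed and then undone, $D_{l+1},\dots,D_n$ remain pairwise disjoint and the first $2l$ pieces keep the order $\{1,1,\dots,l,l\}$. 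Running over all $j$ completes the induction.

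Applying the Claim with $l=n$ yields properly arranged based nice disks $D_1,\dots,D_n$ --- so $D_i$ precedes $D_j$ whenever $i<j$, by Definition~\ref{properly}\,(3) --- with $c_i=\partial D_i$ representing $x_i$ and, once $c_i$ is given the framing $p_{ii}(1)$, with $\lkt(c_i,c_j)=p_{ij}(t)/\Delta_K(t)$ for all $i,j$; that is Lemma~\ref{lem:maintechnicallem}. The hardest step, I expect, is the bookkeeping in Step 2: one must choose the order and the supports of the $R$, $T$ and $F$ moves so that temporarily displacing a piece of $D_j$ never reorders the first $2l$ pieces and never leaves a residual intersection among $D_{l+1},\dots,D_n$, and one must verify that the discrepancies $\lkt(c_l,c_j)-p_{lj}(t)/\Delta_K(t)$ genuinely lie in $(t-1)\zt$, so that the ``finger'' moves $F_1,F_2$ --- which change a twisted linking number only by a multiple of $t^{\pm1}-1$ --- suffice to cancel them.
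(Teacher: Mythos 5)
Your proposal is correct and follows essentially the same inductive strategy as the paper: reposition $D_l$ to the front with $R_1$ moves, then cancel the discrepancies $q_{ll}-p_{ll}$ and $q_{lj}-p_{lj}$ (for $j>l$) by a $\Z$-combination of $T$- and $F$-moves, relying on Lemma~\ref{lem:cchange} to guarantee that each $F$-move touches only the pair $\{c_l,c_j\}$. The one place you diverge cosmetically from the paper is Step~2 for $j>l$: you describe sliding a piece of $D_j$ into the slot next to $D_l$ and back, whereas the paper slides a piece of $D_l$ rightward via $R_2$-moves to meet $D_s$ and returns it via $R_1$-moves; since the defined $R_1$/$R_2$ moves always isotope the \emph{smaller}-indexed piece, the paper's formulation is the one that directly matches the toolbox, though either description leads to the same adjacent configuration.
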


It is clear that the statement of the claim for $l=n$ is precisely the statement of Lemma \ref{lem:maintechnicallem}.

We will prove the claim by induction on $l$. We begin with $l=0$.
First note that Lemma~\ref{lem:fogel} allows us to find disjoint disks $D_1,\ldots,D_n$ such that (2) and (3) are satisfied. 
The remark after Definition~\ref{properly}
shows that $D_1,\ldots,D_n$ are properly arranged. Conditions (4) and (5) for $l=0$ are empty.

Now suppose that the statement of the claim holds for $l-1$.
We thus pick based nice  disks  $D_1,\dots,D_n$ which satisfy the statement of the claim for $l-1$.
We first apply  the type $R_1$ move several times to the `left most' intersection point of $D_{l}$ so  that
the first $2(l-1)+1$ entries of the arrangement of $D_1,\dots,D_n$ are
\[ \{1,1,2,2,\dots,l-1,l-1,l\}.\]
We repeat this procedure with the `right most' intersection point of $D_{l}$ so  that
after several further type $R_1$ moves
the first $2l$ entries of the arrangement of the resulting disks $D_1,\dots,D_n$ are
\[ \{1,1,2,2,\dots,l-1,l-1,l,l\}.\]
Since we applied type $R_1$ moves it follows that the disks are properly arranged.

For $i=1,\dots,n$ we equip $c_i:=\partial D_i$ with the framing $p_{ii}(1)$. We denote by  $q_{ij}(t)$, $i,j\in \{l,\dots,n\}$
the polynomials
which satisfy
\[ \lkt(c_i,c_j)=\frac{q_{ij}(t)}{\Delta_K(t)}.\]
Given $s\in \{1,\dots,n\}$ we now also consider the following property:
\bn
\item[$\mathbf{(5_s)}$] for any $i\in \{1,\dots,l\}$ and $j\in \{1,\dots,s\}$ we have
\[ \lkt(c_i,c_j)=\frac{p_{ij}(t)}{\Delta_K(t)}\in \Q(t).\]
\en
Note that $(5_{l-1})$ holds since the disks satisfy Property (5) for $l-1$ and since the $p_{ij}$ and $q_{ij}$ are both antisymmetric in $i$ and $j$. 
We now proceed with two steps, first we will arrange the disks  such that ($5_l$) holds,
and then we will furthermore modify the disks such that ($5_s$) holds for any $s>l$.

\textbf{(a)} Recall that by   the discussion in Section \ref{section:blanchfield} we have
\[ q_{ll}(1)=\lkt(c_l,c_l)|_{t=1}=\mbox{framing of $c_l$}=p_{ll}(1).\]
It thus follows that $q_{ll}(1)-p_{ll}(1)=0$. Note that furthermore $p_{ll}(t)=p_{ll}(t^{-1})$ by assumption and that $q_{ll}(t)=q_{ll}(t^{-1})$ by the symmetry of $l$.
It now follows that we can write
\[ q_{ll}(t)-p_{ll}(t)=\sum_{i=0}^ka_i(t^i+t^{-i})\]
for some $a_0,\dots,a_k\in \Z$ with $\sum_{i=0}^ka_i=0$.
Put differently, we can write
\[ q_{ll}(t)-p_{ll}(t)=\sum_{i=1}^k b_i(t^i-t^{i-1}-t^{-(i-1)}+t^{-i})\]
for some $b_1,\dots,b_k\in \Z$.

Considering (\ref{equ:lcii}) and (\ref{equ:kij}) it follows easily
that for $i=1,\dots,k$ we can now apply $|b_i|$ times
an appropriate combination of a type $T(n)$ move together with either a type $F_1$ move or a type $F_2$ move
to arrange that
\[ \lkt(c_l,c_l)=\frac{p_{ll}(t)}{\Delta_K(t)}\in \Q(t).\]
This concludes the proof of $(5_l)$.

\textbf{(b)} We now suppose that we have disks which satisfy Properties (1)\dots (4) and $(5_{s-1})$ for some $s-1\geq l$.
It follows from   the discussion in Section \ref{section:blanchfield} that
\[ q_{sl}(1)=\lkt(c_s,c_l)|_{t=1}=\lk(c_s,c_l)=0=p_{sl}(1).\]
It thus follows that $q_{sl}(1)-p_{sl}(1)=0$. We can therefore write
\[ q_{sl}(t)-p_{sl}(t)=\sum_{i=-k}^k b_i(t^i-t^{i-1})\]
for some $b_{-k},\dots,b_k\in \Z$.
We now apply the type $R_2$ moves several times so that the right hand  piece of $D_l$ is adjacent to the piece of $D_s$
with the opposite orientation.
Considering (\ref{equ:lcij}) and (\ref{equ:kij}) it follows easily
that for $i=-k,\dots,k$ we can now apply $|b_i|$ times
an appropriate combination of a type $T(n)$ move together with either a type $F_1$ move or a type $F_2$ move
to arrange that
\[ \lkt(c_s,c_l)=\frac{p_{sl}(t)}{\Delta_K(t)}\in \Q(t).\]
Finally we conclude with several type $R_1$ moves so that the arrangement is unchanged.
Note that the resulting disks are again properly arranged.

After Steps (a) and (b) the resulting disks clearly have the required properties. This concludes the proof of the claim and
thus of Lemma \ref{lem:maintechnicallem}.
\end{proof}

\end{document}